\newcommand{\email}[1]{\href{mailto:#1}{#1}}
\newcommand{\norm}[1]{\|#1\|}
\newcommand{\jump}[1]{\llbracket #1\rrbracket}
\newcommand{\avg}[1]{\lbrace\!\!\lbrace#1\rbrace\!\!\rbrace}
\newcommand{\bob}{{\boldsymbol \beta}}
\newcommand{\vv}{{\mathbf{v}}}
\newcommand{\vw}{{\mathbf{w}}}
\newcommand{\vx}{{\mathbf{x}}}
\newcommand{\vq}{{\mathbf{q}}}
\newcommand{\ben}{\begin{enumerate}}
\newcommand{\een}{\end{enumerate}}
\newcommand{\bea}{\begin{eqnarray}}
\newcommand{\eea}{\end{eqnarray}}
\newcommand{\vp}{{\mathbf{p}}}
\newcommand{\vW}{{\mathbf{W}}}
\newcommand{\cT}{{\mathcal T}}
\newcommand{\cQ}{{\mathcal Q}}
\newcommand{\cR}{{\mathcal R}}
\newcommand{\cF}{{\mathcal F}}
\newcommand{\be}{\begin{equation}}
\newcommand{\ee}{\end{equation}}
\newcommand{\eps}{\varepsilon}
\newcommand{\R}{\ensuremath{\mathbb{R}}}
\newcommand{\N}{\ensuremath{\mathbb{N}}}
\newcommand{\dsp}{\displaystyle}
\newcommand{\dt}{\partial_t}
\newcommand{\h}{{\mathfrak{h}}}
\newcommand{\Th}{\mathcal{T}_{\h}}
\newcommand{\Fh}{\mathcal{F}_{\h}}
\newcommand{\Fhi}{\Fh^{{\rm i}}}
\newcommand{\Fhb}{\Fh^{{\rm b}}}
\newcommand{\closure}[1]{\overline{#1}}
\newcommand{\normal}{\vec{n}}
\newcommand{\eqbydef}{\mathrel{\mathop:}=}
\newcommand{\st}{\; | \;}
\newcommand{\beq}{\begin{equation}}
\newcommand{\eeq}{\end{equation}}
\newcommand{\bM}{{\mathbb{M}}}
\newcommand{\bS}{{\mathbb{S}}}
\newcommand{\bE}{{\mathbb{E}}}
\newcommand{\bF}{{\mathbb{F}}}
\newcommand{\bD}{{\mathbb{D}}}
\DeclareMathOperator{\opcard}{card}
\newcommand{\card}[1]{\opcard(#1)}
\newenvironment{fig}{}{}
\newtheorem{theorem}{Theorem}
\newtheorem{remark}[theorem]{Remark}
\newtheorem{proposition}[theorem]{Proposition}
\def\thm@space@setup{%
  \thm@preskip=\parskip \thm@postskip=0pt
}
\date{}
\title{A discontinuous Galerkin method for a new class of Green-Naghdi equations on simplicial unstructured meshes}
\author[1]{Arnaud Duran\footnote{\email{Arnaud.Duran@math.univ-toulouse.fr}}}
\affil[1]{INSA Toulouse, 135 Avenue de Rangueil, 31400 Toulouse, France}
\author[2]{Fabien Marche\footnote{\email{Fabien.Marche@math.univ-montp2.fr}}}
\affil[2]{University of Montpellier, Institut Montpelli\'{e}rain Alexander Grothendieck and Inria LEMON team, 34095 Montpellier, France}
\begin{document}

\maketitle

\begin{abstract}
In this paper, we introduce a discontinuous Finite Element formulation on simplicial unstructured meshes for the study of free surface flows based on the fully nonlinear and weakly dispersive Green-Naghdi equations. Working with a new class of asymptotically equivalent equations, which have a simplified analytical structure, we consider a decoupling strategy: we approximate the solutions of the classical shallow water equations supplemented with a source term globally accounting for the non-hydrostatic effects and we show that this source term can be computed through the resolution of scalar elliptic second-order sub-problems. The assets of the proposed discrete formulation are:
 \begin{inparaenum}[(i)]
  \item the handling of arbitrary unstructured simplicial meshes,
  \item an arbitrary order of approximation in space, 
  \item the exact preservation of the motionless steady states, 
  \item the preservation of the water height positivity, 
  \item a simple way to enhance any numerical code based on the nonlinear shallow water equations.
 \end{inparaenum}
   The resulting numerical model is validated through several benchmarks involving nonlinear wave transformations and run-up over complex topographies.
   \smallskip
  \\
  \\
  \noindent\emph{Keywords:} Green-Naghdi equations, discontinuous Galerkin, high-order schemes, free surface flows, Shallow Water Equations, dispersive equations 
\end{abstract}


\section{Introduction}
\label{intro}
The mathematical modelling and numerical approximations of free surface water waves propagation and transformations in near-shore areas has received a lot of interest for the last decades, motivated by the perspective of acquiring a better understanding of important physical processes associated with the nonlinear and non-hydrostatic propagation over uneven bottoms. Great improvements have been obtained in the derivation and mathematical understanding of particular asymptotic models able to describe the behavior of the solution in some physical specific regimes. A recent review of the different models that may be derived can be found in \cite{Lannes:2009p4892}. In this work, we focus on the \textit{shallow water} and \textit{large amplitude} regime: the water depth $h_0$ is assumed to be small compared to the typical wave length $\lambda$:
$$
\mbox{(\textit{shallow water regime}) }\quad \mu:=\frac{h_0^2}{\lambda^2}\ll 1.
$$ 
while there is no assumption on the size of the wave's amplitude $a$:
\be
\mbox{(\textit{large amplitude regime}) }\quad \eps:=\frac{a}{h_0}=O(1).
\ee
Under this regime, the classical Nonlinear Shallow Water (NSW) equations can be derived from the full water waves equations by neglecting all the terms of order $O(\mu)$, see for instance \cite{lannes:book}. This model is able to provide an accurate description of important unsteady processes in the surf and swash zones, such as nonlinear wave transformations, run-up and flooding due to storm waves, see for instance \cite{Bonneton:2007p1191}, but it neglects the dispersive effects which are fundamental for the study of wave transformations in the shoaling area and possibly slightly deeper water areas.\\
Keeping the $O(\mu)$ terms in the analysis, the corresponding equations have been derived first by Serre \cite{Serre:1953p5261} in the horizontal surface dimension $d=1$, by Green and Naghdi \cite{CambridgeJournals:387109} for the $d=2$ case, and have
been recently mathematically justified in \cite{AlvarezSamaniego:2008p227}. \\
As far as numerical approximation is concerned, it is only recently that the Green-Naghdi (GN) equations have really received attention and several methods have been proposed including Finite Differences (FD), Finite Volumes (FV), Finite Elements (FE) or Spectral methods, see for instance among others \cite{Antunes-do-carmoSeabra-Santos:1993aa, Cienfuegos:2006p226, Li2014169, Dutykh:2013fk, shi:2012, PandaDawson:2014aa}.\\
After having developed and validated several hybrid FV-FD formulations based on a temporal splitting approach (see \cite{Bonneton20111479, MR2811693, rchgth89} for the $d=1$ case and \cite{lannes_marche:2014} for the $d=2$ case on structured cartesian grids), we have recently introduced in \cite{DuranMarche:2014ab} a new approach based on a fully discontinuous FE method in the $d=1$ case. This approach allows to decouple the hyperbolic and elliptic parts of the model by computing the solutions of the NSW equations supplemented by an additional algebraic source term, which fully accounts for the whole dispersive correction, and which is itself obtained from the resolution of auxiliary elliptic and coercive linear second order problems. With this approach we have paved the way towards:
 \begin{enumerate}
 \item[$\sharp$] more flexibility, as the proposed discrete formulation handles an arbitrary order of accuracy in space and, although limited in the $d=1$ case in \cite{DuranMarche:2014ab}, it conceptually generalizes to the dimension $d=2$ with unstructured meshes. It can moreover be straightforwardly used to enhance any numerical code based on the NSW equations, 
 
 \item[$\sharp$] more efficiency, as we built this approach on the new set of GN equations issued from \cite{lannes_marche:2014} which  allows to perform the corresponding matrix assembling and factorization in a preprocessing step, leading to considerable computational time savings.
 \end{enumerate}
Although similar decoupling strategies have subsequently been used to implement some hybrid approaches based on the classical GN equations (see \cite{Popinet:2015aa} for the $d=2$ case on cartesian meshes and \cite{FilippiniKazolea:2016aa} for the $d=1$ case), there is still no studies, up to our knowledge, that allows to approximate the solutions of GN equations on arbitrary unstructured meshes in the multidimensional case: this is the main purpose of this work. Starting from the new model recently introduced in \cite{lannes_marche:2014}, we consider a Discontinuous Galerkin (DG) formulation and generalize to the $d=2$ case the decoupling strategy of \cite{DuranMarche:2014ab}:
 \begin{enumerate}
 \item[$\sharp 1$] we compute the flow variables using a RK-DG approach \cite{Cockburn1998199} by approximating the solutions of the hyperbolic set of NSW equations supplemented with an additional source term that accounts for the whole dispersive correction, 
 \item[$\sharp 2$] the dispersive source term is obtained from the computation of auxiliary \textit{scalar} linear elliptic problems of second order, approximated through a mixed formulation and \textit{Local Discontinuous Galerkin (LDG)} stabilizing interface fluxes \cite{CockburnShu:1998aa, CastilloCockburn:2000aa}. 
 \end{enumerate}
 \noindent
 
 \noindent
This results in a global formulation of arbitrary order of accuracy in space, which is shown to preserve both the non-negativity of the water height and the motionless steady states up to the machine accuracy. These properties are particularly important in the study of propagating waves reaching the shore. Note that while this multidimensional strategy is clearly promising in terms of flexibility, the emphasize is put here on efficiency. First, the use of the \textit{diagonal constant} model issued from \cite{lannes_marche:2014} allows to considerably simplify the computations associated with the elliptic sub-problems. Indeed, the time evolutions of the velocity vector's components are here fully decoupled, thanks to the simplified analytical structure of the model. The problem therefore reduces to the approximation of \textit{scalar problems} without any third order derivatives, instead of the more complex vectorial ones obtained with the classical GN equations. The corresponding matrix can be assembled and the associated LU factorization stored in a pre-processing step. Secondly, the use of a \textit{nodal} approach \cite{HesthavenWarburton:2002aa} together with the \textit{pre-balanced} formulation of the hyperbolic part of the model (see for instance \cite{duran:2013, duran:marche:dg}), allows to combine 
\begin{inparaenum}[(i)]
 \item an efficient quadrature free treatment for the integrals which are not involved into the equilibrium balance, 
 \item a quadrature-based treatment with a lower computational cost, needed to exactly compute the surface and face integrals involved in the preservation of the steady states at rest,
 \item a direct nodal product method, in the spirit of pseudo-spectral methods, for the strongly nonlinear terms occurring in the source terms of the elliptic sub-problems.\end{inparaenum}\\
This remainder of this work is organized as follows: we describe the mathematical model and the notations in the next section. Section $3$ is devoted to the introduction of the discrete settings and the DG formulations for both hyperbolic and elliptic  sub-problems. This approach is then validated in the last section through convergence analysis and comparisons with data taken from experiments with several discriminating benchmark problems.

\section{The physical model}\label{model}

Let us denote by $\vx=(x,y)$ the horizontal variables, $z$ the vertical variable and $t$ the time variable. In the following, $\zeta(t,\vx)$ describes the free surface elevation with respect to its rest state, $h_0$ is a reference depth, $-h_0+b(\vx)$ is a parametrization of the bottom and $h:=h_0+\zeta-b$ is the water depth, as shown on Figure \ref{domain}. 
Denoting by $U_{hor}$ the horizontal component of the
velocity field in the fluid domain, we define the vertically averaged
horizontal velocity $\vv=(u,v)\in \R^2$ as
$$
\vv(t,\vx)=\frac{1}{h}\int_{-h_0+b}^\zeta U_{hor}(t,\vx,z)dz,
$$ 
and we denote by $\vq=h\vv$ the corresponding horizontal momentum.
\medskip
\begin{figure}[H]
\psfrag{z}{{\footnotesize $z$}}
\psfrag{0}{{\footnotesize $0$}}
\psfrag{h0}[c][c]{\textcolor{white}{7\footnotesize{2}}{\footnotesize $-h_0$}}
\psfrag{N}{{\footnotesize $z=\zeta(t,\vx)$}}
\psfrag{F}{{\footnotesize $z=-h_0+b(\vx)$}}
\psfrag{X}{{\footnotesize $\vx=(x,y)$}}
\hspace{0.45em}\includegraphics[width=0.75\textwidth]{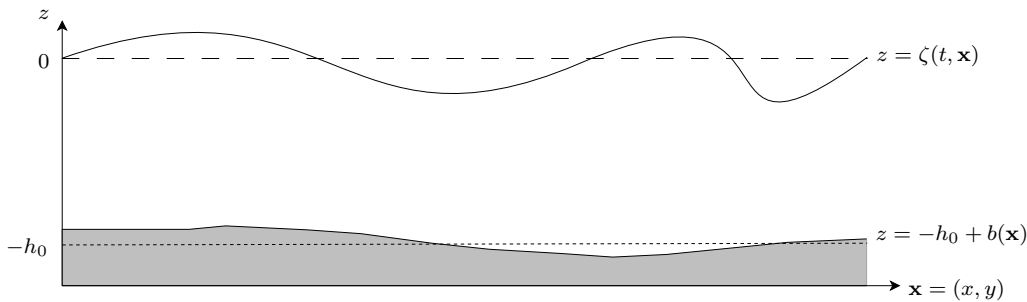}
\caption{Sketch of the domain}
\label{domain}
\end{figure}

\subsection{The Green-Naghdi equations}\label{NDGN}

According to \cite{Bonneton20111479}, the Green-Naghdi equations may be written as follows:
\be\label{eq6}
	\left\lbrace
	\begin{array}{l}
	\dsp \dt \zeta +\nabla\cdot (h \vv)=0,\\
        \dsp (I+\cT[h,b])\big[\dt \vv+(\vv\cdot \nabla)\vv\big]+
	g\nabla\zeta 
	+\cQ_1[h,b](\vv)=0,
	\end{array}\right.
\ee
where the linear operator $\cT[h,b]\cdot$ and the quadratic form $\cQ_1[h,b](\cdot)$ 
are defined for all smooth enough $\R^2$-valued function $\vw$ by
\bea
	\label{eq2}
		\cT[h,b]\vw &=&
	\cR_1[h,b](\nabla\cdot \vw)
	+\cR_2[h,b](\nabla b\cdot \vw),\\
	\label{eq11}
	\cQ_1[h,b](\vw) &=& -2\cR_1[h,b](\partial_1\vw\cdot\partial_2\vw^\perp+(\nabla\cdot \vw)^2)
	+\cR_2[h,b](\vw\cdot(\vw\cdot\nabla)\nabla b),
\eea
(here $\partial_1$ and $\partial_2$ denote space derivatives in the
two horizontal directions) with, for all smooth enough scalar-valued function $w$,
\bea
	\label{eq4}
	\cR_1[h,b]w&=&-\frac{1}{3h}\nabla(h^3 w)
	-\frac{h}{2}w\nabla b,\\
	\label{eq5}
	\cR_2[h,b]w&=&\frac{1}{2h}\nabla(h^2 w)
	+w\nabla b.
\eea
We also recall in \cite{Bonneton20111479} that the dispersion properties of (\ref{eq6}) can be improved by adding some terms of order $O(\mu^2)$ to the momentum equation, which consequently  does not affect the accuracy of the model. An asymptotically equivalent enhanced family of models parametrized by $\alpha > 0$ is given by
\be\label{eq6imp}
	\left\lbrace
	\begin{array}{l}
	\dsp \dt \zeta +\nabla\cdot (h \vv)=0,\\
        \dsp (I+\alpha\cT[h,b])\left( \dt \vv+(\vv\cdot \nabla)\vv+\frac{\alpha-1}{\alpha}g\nabla\zeta\right)
        +\frac{1}{\alpha}g\nabla\zeta 
	+\cQ_1[h,b](\vv)=0.
	\end{array}\right.
\ee
or alternatively written in $(h,h\vv)$ variables:
\be\label{eq6imphhVbis}
	\left\lbrace
	\begin{array}{l}
	\dsp \dt h +\nabla\cdot (h \vv)=0,\\
       	\dsp (I+\alpha {\bf T}[h,b])\left( \dt (h\vv) 
        \!+\!\nabla\!\cdot\! (h\vv \otimes \vv) + \!\frac{\alpha\!-\!1}{\alpha} gh \nabla \zeta\right) + \frac{1}{\alpha} gh\nabla\zeta 
	+ h\cQ_1[h,b](\vv)  = 0,
	\end{array}\right.
\ee
where we have introduced the operator ${\bf T}[h,b]$ defined as follows
$$
{\bf T}[h,b]\vw=h\cT[h,b](\frac{\vw}{h}).
$$
As shown in \cite{DuranMarche:2014ab}, \eqref{eq6imphhVbis} can be recast as
\begin{subnumcases}{}
     \dt h +\nabla\cdot (h \vv)=0,\label{disp:1}   \\
    \dt (h\vv) +\nabla\cdot(h\vv\otimes \vv) + gh \nabla \zeta
+ \mathcal{D}_o=0,\label{disp:2}\\
(I\!+\!\alpha {\bf T}[h,b])(\mathcal{D}_o + \frac{1}{\alpha}gh \nabla \zeta ) = \frac{1}{\alpha} gh\nabla\zeta 
	+ h\cQ_1[h,b](\vv).\label{disp:3}
    \end{subnumcases}
which highlights the fact that the dispersive correction of order $O(\mu)$ only acts as a source term $\mathcal{D}_o$ in \eqref{disp:2}, and is obtained as the solution of an auxiliary second-order elliptic sub-problem \eqref{disp:3}.

\begin{remark}\label{remark:benefits}
These formulations have two main advantages:
\begin{enumerate}
\item They do not require the computation of third order derivatives, while this is necessary in the standard formulation of the GN equations, 

\item The presence of the operator $(I+\alpha{\bf T}[h,b])^{-1}$ makes the models robust
with respect to high frequency perturbations, which is an interesting property for numerical computations. 
\end{enumerate}
\end{remark}

\begin{remark}
These formulations have two main drawbacks: 
\begin{enumerate}
\item Solving linear systems arising from discrete formulations of \eqref{eq6imphhVbis} may be computationally expensive as $I+\alpha{\bf T}[h,b]$ is a \emph{matricial} second order differential operator acting on two-dimensional vectors. This structure entails a coupling of the time evolutions of the two components of $h\vv$ through \eqref{disp:2}.

\item $I+\alpha{\bf T}[h,b]$ is a time dependent operator (through the dependence on $h$) and this is of course the same for any associated discrete formulations: the corresponding matrices have to be assembled at each time step or sub-steps.
\end{enumerate}
\end{remark}

\subsection{A Green-Naghdi model with a simplified analytical structure}\label{model:diag:const}

In \cite{lannes_marche:2014} , some new families of models are introduced to overcome these drawbacks, without loosing the benefits listed in Remark \ref{remark:benefits}. In particular, defining a modified water depth at rest (which therefore does not depend on time):
$$h_b=\max(h_0-b, \eps_0)=\max(h-\zeta,\eps_0),$$ where $\eps_0$ is a numerical threshold introduced to account for possible dry areas in a consistent way, the  \textit{one-parameter optimized constant-diagonal} GN equations read as follows:
\be\label{eq6impnewter}
	\left\lbrace
	\begin{array}{l}
	\dsp \dt h +\nabla\cdot (h \vv)=0,\\
	
	\vspace{0.1cm}
       	\dsp \big[1+\alpha\mathds{T}[h_b]\big]\left(\dt (h\vv) 
        +\nabla\cdot (h\vv\otimes \vv)+\frac{\alpha-1}{\alpha}gh\nabla\zeta\right) +
	 \frac{1}{\alpha}h\nabla\zeta + \mathds{Q}[h,b](\zeta, \vv)=0,
	 \
	\end{array}\right.
\ee
where for all smooth enough $\R$-valued function $w$:
\be\label{Tcg}
\mathds{T}[h_b]w= -\frac{1}{3}\nabla\cdot(h_b^3\nabla\frac{w}{h_b}),
\ee
and
\be\label{Qcg}
\mathds{Q}[h,b](\zeta, \vv)=h(\cQ_1[h,b](\vv) +g\cQ_2[h,b](\zeta)) +g\cQ_3[h,h_b]\left( \big[1+\alpha \mathds{T}[h_b]\big]^{-1}(h\nabla\zeta)\right),
\ee
is a second order nonlinear operator with
\be\label{Q2CG}
\cQ_2[h,b](\zeta) = -h(\nabla^\perp h\cdot\nabla) \nabla^\perp\zeta
-\frac{1}{2h}\nabla(h^2 \nabla b\cdot \nabla\zeta) + \big(\frac{h}{2}\Delta\zeta - (\nabla b\cdot \nabla\zeta)\big)\nabla b,
\ee
and for all smooth enough $\R$-valued function $w$
\be\label{defS}
\cQ_3[h,h_b] w= \frac{1}{6}\nabla (h^2-h_b^2)\cdot \nabla w +\frac{h^2-h_b^2}{3}\Delta w -\frac{1}{6}\Delta(h^2-h_b^2)w.
\ee

\begin{remark}
We actually show in \cite{lannes_marche:2014} that it is indeed possible to replace the inversion of $I+\alpha{\bf T}[h,b]$ by the inversion of $1+\alpha\mathds{T}[h_b]$, where $\mathds{T}[h_b]$ depends only on the fluid at rest (i.e. $\zeta=0$), while keeping the asymptotic $O(\mu^2)$ order of the expansion. The interest of working with (\ref{eq6impnewter}) rather than (\ref{eq6imphhVbis}) is that $\mathds{T}[h_b]$ has a simplified \textit{scalar} structure, \textit{i.e.} it can be written in matricial form as
\begin{equation}\label{structTtilde}
\left(\begin{array}{cc}
-\frac{1}{3}\nabla\cdot(h_b^3\nabla\frac{1}{h_b}\cdot) & 0\\
0  & -\frac{1}{3}\nabla\cdot(h_b^3\nabla\frac{1}{h_b}\cdot)
\end{array}\right).
\end{equation}
From a numerical viewpoint, this simplified analytical structure allows to compute each component of the discharge $h\vv$ separately, and to alleviate the computational cost associated with the dispersive correction of the model, as the discrete version of $1+\alpha\mathds{T}[h_b]$ may be assembled and factorized once and for all, in a preprocessing step.
\end{remark}
\begin{remark}
 The other difference with (\ref{eq6imphhVbis}) is the presence of the modified quadratic term $\mathds{Q}[h,b]$, which shares with (\ref{eq6imphhVbis}) the nice property that no computation of third order derivative is needed. The price to pay is the inversion of an extra linear
system, through the computation of $\cQ_3[h,h_b]\left( \big[1+\alpha \mathds{T}[h_b]\big]^{-1}(h\nabla\zeta)\right)$. However, this extra cost is largely off-set by the gain obtained by using the time independent scalar operator $\mathds{T}[h_b]$. 
\end{remark}

\begin{remark}\label{reminv}
One could replace $\cQ_3[h,h_b]\left( \big[I+\alpha\mathds{T}[h_b]\big]^{-1}(h\nabla\zeta)\right)$ by $\cQ_3[h,h_b]\left(h\nabla\zeta\right)$ in the second equation of (\ref{eq6impnewter}), keeping the same asymptotic $O(\mu^2)$ order. This would avoid the resolution of this extra linear system but leads to
strong instabilities. We
refer to \cite{lannes_marche:2014}  for more comments on this point.
\end{remark}

\begin{remark}
As shown in \cite{DuranMarche:2014ab}, the model \eqref{eq6impnewter} can be recast as follows:
\begin{subnumcases}{\left(\mathcal{CG}_\alpha\right)}
     \dt h +\nabla\cdot (h \vv)=0,\label{disp:1c}   \\
    \dt (h\vv) +\nabla\cdot(h\vv\otimes \vv) + gh \nabla \zeta
+ \mathcal{D}_c=0,\label{disp:2c}\\
\big[1\!+\!\alpha\mathds{T}[h_b] \big](\mathcal{D}_c + \frac{1}{\alpha} h\nabla\zeta)= 
	 h( \frac{1}{\alpha} \nabla\zeta + \cQ_1[h,b](\vv) +g\cQ_2[h,b](\zeta)) +\cQ_3[h,h_b]\mathcal{K},\label{disp:3c}\\
\big[1\!+\!\alpha\mathds{T}[h_b]\big]\mathcal{K} =	gh\nabla\zeta,\label{disp:4c}
    \end{subnumcases}    
and we see that the dispersive correction $\mathcal{D}_c$ acting as a source term in \eqref{disp:2c}, is obtained as the solution of auxiliary scalar second-order elliptic sub-problems. 
%
\end{remark}

\begin{remark}
We highlight that $\left(\mathcal{CG}_\alpha\right)$ still does not involve third order derivative computation, which is a very interesting property. It is indeed shown in \cite{FilippiniKazolea:2016aa} that when third order derivatives on the free surface occur, it is important to introduce some sophisticated approximation strategies for the free surface gradient to reduce the dispersion error. 
\end{remark}

\subsection{A pre-balanced Green-Naghdi formulation}\label{model:pre:bal}

Paving the way towards the construction of a well-balanced and efficient discrete formulation in \S\ref{discrete:sec}, we now adjust the \textit{pre-balanced} approach of \cite{Liang2009873} for the case $d=1$ and \cite{duran:2013} for the case $d=2$ to the Green-Naghdi equations $\left(\mathcal{CG}_\alpha\right)$. We introduce the {\it total free surface elevation} $\eta=h+b$, denote $\vW={}^t(\eta, \vq)$ and use the following splitting of the hydrostatic pressure term: 
\be\label{pre:bal:split}
gh\nabla \zeta = \frac{1}{2}g\nabla(\eta^2 - 2\eta b) + g\eta \nabla b,
\ee
to obtain the \textit{pre-balanced} formulation of \eqref{disp:1c}-\eqref{disp:2c}
 \begin{subnumcases}{}
     \dt \eta +\nabla\cdot \vq=0,\label{disp:1d}   \\
    \dt \vq +\nabla\cdot \mathcal{F}(\vW, b)  
+ \mathcal{D}_c= \mathcal{B}(\eta,b),\label{disp:2d}
 \end{subnumcases}     
 with the flux and topography source terms defined as
\be\label{flux:pre:bal}
\mathcal{F}(\vW,b) =  \frac{\vq\otimes \vq}{\eta-b} + \frac{1}{2}g\nabla(\eta^2 - 2\eta b) I_2,\quad\quad
 \mathcal{B}(\eta,b)=- g\eta \nabla b.
\ee
We obtain a \textit{one-parameter pre-balanced constant-diagonal Green-Naghdi} model (more simply referred to as $\left(\mathcal{PCG}_\alpha\right)$ model in the following):
\begin{subnumcases}{}
     \dt \eta +\nabla\cdot \vq=0,\label{pb:disp:1c}   \\
      \dt \vq +\nabla\cdot \mathcal{F}(\vW, b)  
+ \mathcal{D}_c= \mathcal{B}(\eta,b),\label{pb:disp:2c}\\
\big[1\!+\!\alpha\mathds{T}[h_b] \big](\mathcal{D}_c + \frac{1}{\alpha} gh\nabla\eta)= 
	h(\frac{1}{\alpha}g\nabla\eta + \cQ_1[h,b](\vv) +g\cQ_2[h,b](\eta)) +\cQ_3[h,h_b]\mathcal{K},\label{pb:disp:3c}\\
\big[1\!+\!\alpha\mathds{T}[h_b]\big]\mathcal{K} =	gh\nabla\eta.\label{pb:disp:4c}
    \end{subnumcases}    

\section{Discrete formulation}\label{discrete:sec}
\subsection{Settings and notations}
Let $\Omega\subset\R^d$, $d=2$, denote an open bounded connected polygonal domain with boundary $\partial\Omega$. We consider a geometrically conforming mesh $\Th$ defined as a finite collection of nonempty disjoint open triangular elements $T$ of boundary $\partial T$ such that $\closure{\Omega}=\bigcup_{T\in\Th}\closure{T}$. The meshsize is defined as $\h=\max\limits_{T\in\Th} \,\h_T$
with $\h_T$ standing for the diameter of the element $T$ and we denote $\vert T \vert$ the area of $T$, $\mathfrak{p}_T$ its perimeter and $\normal_{T}$ its unit outward normal.\\
\noindent
Mesh faces are collected in the set $\Fh$ and the length of a face $F\in\Fh$ is denoted by $\vert F\vert $. A mesh face $F$ is such that either there exist $T_1,T_2\in\Th $ such that $F\subset\partial T_1\cap\partial T_2$ ($F$ is called an interface and $F\in\Fhi$) or there exists $T\in\Th$ such that $F\subset\partial T \cap\partial\Omega$ ($F$ is called a boundary face and $F\in\Fhb$). 
For all $T\in\Th$, $\cF_T\eqbydef\{F\in\Fh\st F\subset\partial T \}$ denotes the set of faces belonging to $\partial T $ and, for all $F\in\cF_T$, $\normal_{TF}$ is the unit normal to $F$ pointing out of $T$.\\
\noindent
In what follows, we consider $\mathbb{P}^{k}(\Th)$ the broken bivariate polynomial space defined as follows:
\begin{equation}\label{broken}
\mathbb{P}^{k}(\Th):= \lbrace{ v \in L^{2}(\Omega) \, \vert \,  \;v_{\vert T}  \,\in \, \mathbb{P}^{k}(T) \;\;\forall \, T \, \in \,\mathcal{T}_{\h}\rbrace}, 
\end{equation}
where $\mathbb{P}^{k}(T)$ denotes the space of bivariates polynomials in $T$ of degree at most $k$, and 
we define $X_\h = \mathbb{P}^{k}(\Th)\times \left( \mathbb{P}^{k}(\Th)\right)^2$. We denote $N_k = \dim(\mathbb{P}^{k}(T))=(k+1)(k+2)/2$.\\

\noindent
To discretize in time, for a given final computational time $t_{\textrm{max}}$, we consider a partition $(t^n)_{0\leq n\leq N}$ of the time interval $[0, t_{\textrm{max}}]$ with $t^0=0$, $t^N=t_{\textrm{max}}$ and $t^{n+1}-t^{n}=\Delta t^n$. For any sufficiently regular function of time $w$, we denote by $w^n$ its value at discrete time $t^n$. More details on the computations of $\Delta t^n$ and the time marching algorithms are given in \S\ref{Time}.

\subsection{Discrete formulation for the advection-dominated equations}

Postponing to \S\ref{sect:disp} the computation of the projection of the dispersive correction $\mathbb{D}(\vW,b)$ on the approximation space,  we focus here on the discrete formulation associated to \eqref{pb:disp:1c}-\eqref{pb:disp:2c}, written in a more compact way:
 \be\label{compact}
 \partial_t \vW+\nabla\cdot \mathbb{F}(\vW,b) + \mathbb{D}(\vW,b)=\mathbb{B}(\vW,b),
 \ee
with
\begin{align*}
\mathbb{B}(\vW,b) = {}^t \left( 0, {}^t\mathcal{B}(\eta,b) \right), \;\;\;\;  \mathbb{D}(\vW,b)={}^t\left(0, {}^t\mathcal{D}_c\right)
\;\;\;\;\mbox{and}\;\;\;\;
\mathbb{F}(\vW,b) = 
\begin{pmatrix} {}^t\vq \\  \mathcal{F}(\vW,b) \end{pmatrix}.
\end{align*}

\subsubsection{The discrete problem}
We seek an approximate solution $\vW_\h={}^t\left(\eta_\h, {}^t\vq_\h \right)$ of \eqref{compact} in $X_\h$. Requiring the associated residual to be orthogonal to $\mathbb{P}^{k}(\Th)$, the semi-discrete formulation reduces to the local statement: find $(\eta_\h, {}^t\vq_\h)$ in $X_\h$ such that
\begin{align}
&\int_{T} \frac{d}{dt} \vW_\h\pi_{\h}d\vx -\int_{T}\mathbb{F}(\vW_\h, b_\h)\cdot\nabla \pi_{\h}d\vx \,+ \int_{\partial T}(\mathbb{F}(\vW_\h, b_\h)\cdot \normal_{T}) \,\pi_{\h}ds\nonumber\\
&\quad\quad\quad\quad\quad + \int_{T} \mathbb{D}(\vW_\h,b_\h)\pi_{\h}d\vx = \int_{T} \mathbb{B}(\vW_\h,b_\h)\pi_{\h}d\vx,\label{discrete:2}
\end{align}
for all $\pi_\h\in\mathbb{P}^{k}(\Th)$ and all element $T\in\Th$, where $b_\h$ refers to the $L^2$ projection of $b$ on $\mathbb{P}^{k}(\Th)$ and $\mathbb{D}(\vW_\h,b_\h)$ stands for a polynomial description of $\mathbb{D}(\vW,b)$ in $X_\h$, to be obtained in \S\ref{sect:disp}.

\begin{remark}\label{rem:base:T}
Considering a local basis $\{ \phi_i\}_{i=1}^{N_k}$ for a given element $T\in \Th$,
the local discrete solution $\vW_{\h\vert T}$ may be expanded as:
\begin{equation}\label{expansion}
\vW_{\h\vert T}(\vx,t) = \sum\limits_{i=1}^{N_k} \tilde{\vW}_i(t) \phi_i(\vx),\;\;\forall \vx\in T, \forall t \in [0, t_{max}],
\end{equation}
where $\{\tilde{\vW}_i(t)\}_{i=1}^{N_k}$ are the local expansion coefficients, defined as $\tilde{\vW}_i(t)={}^t(\tilde{\eta}_i, {}^t\tilde{\vq}_i)$ and $\tilde{\vq}_i = {}^t((\tilde{q}_x)_i, (\tilde{q}_y)_i)$.\\
 Many choices are of course possible for the basis. In the following, $\{ \phi_i\}_{i=1}^{N_k}$ refers to the interpolant (nodal) basis on the element $T$ and we choose the Fekete nodes \cite{TaylorWingate:2000aa} as approximation points.\\
 \end{remark}
\noindent
Equipped with such local basis, following Remark \ref{rem:base:T}, and splitting the boundary integral into faces integrals, the local statement \eqref{discrete:2} is now equivalent to
\begin{equation}
 \begin{split}
\sum\limits_{i=1}^{N_k}& \left(\int_{T} \phi_{i}\phi_{j}\,d\vx\right) \frac{d}{ dt}\tilde{\vW}_i(t) 
  -\int_{T}\mathbb{F}(\vW_{\h},b_\h)\cdot\nabla \phi_j\,d\vx +    \sum\limits_{F\in\cF_T} \int_{F} \widehat{\mathbb{F}}_{TF}\,\phi_j\,ds\\&   + \int_{T}\mathbb{D}(\vW_{\h},b_\h)\phi_j\,d\vx= \int_{T}\mathbb{B}(\vW_{\h},b_\h)\phi_j\,d\vx,\;\;\quad  1\leq j\leq N_k,
\label{Weak_formulation2}
\end{split}
\end{equation}
where $ \widehat{\mathbb{F}}_{TF}$ is a stabilizing numerical approximation of the normal interface flux $\mathbb{F}(\vW_{\h},b_\h)\cdot \normal_{TF}$, to be defined in the following.

\subsubsection{Interface fluxes and well-balancing}\label{numerical:flux}
We recall in the following a simple choice to approximate the interface fluxes \cite{duran:marche:dg}, leading to a well-balanced scheme that preserves motionless steady states. This modified flux can also be seen as the adaptation of the ideas of \cite{xing:2013} to the {\it pre-balanced} formulation (\ref{disp:1d})-(\ref{disp:2d}).\\
Consider a face $F\in\cF_T$ (for the sake of simplicity, we only focus on the case $F\in \Fh^i$ and do not detail the case $F\in \Fh^b$). Let us denote denote $\vW^{-}$ and $\vW^{+}$ respectively the {\it interior} and {\it exterior} traces on $F$, with respect to the elements $T$. Similarly, $b^{-}$ and $b^{+}$ stand for the {\it interior} and {\it exterior} traces of $b_{\h}$ on $F$.
We define:
\beq\label{topo:check}
b^* = \max(b^-,b^+),\quad\quad \check{b} = b^* - \max(0, b^* - \eta^-)
\eeq
and
\begin{align}
&\check{h}^- = \max(0, \eta^- - b^*), \quad\quad \check{h}^+ = \max(0, \eta^+ - b^*),\label{def:1}\\
&\check{\eta}^- = \check{h}^- + \check{b}, \quad\quad\quad\quad\quad\;\; \check{\eta}^+ = \check{h}^+ + \check{b},\label{def:2}
\end{align}
leading to the new {\it interior} and {\it exterior} values:
\beq\label{ext_int}
\check{\vW}^{-} = {}^t(\check{\eta}^-,  \frac{\check{h}^-}{\eta^- - b^-}{\bf q^-}),\;\;\;\; \check{\vW}^{+} = {}^t(\check{\eta}^+,  \frac{\check{h}^+}{\eta^+ - b^+}{\bf q^+}).
\eeq
Now we set
\beq\label{num:flux}
\widehat{\mathbb{F}}_{TF} = \mathbb{F}_\h(\check{\vW}^{-}, \check{\vW}^{+}, \check{b}, \check{b},\normal_{TF}) + \widetilde{\mathbb{F}}_{TF},
\eeq
as the numerical flux function through the interface $F$,
where:
\begin{enumerate}
\item the numerical flux function $\mathbb{F}_\h$ is the global Lax-Friedrichs flux:
\beq\label{LFflux}
\mathbb{F}_\h(\vW^-, \vW^+, b^-, b^+,\normal_{TF}) = \frac{1}{2}( \mathbb{F}(\vW^-, b^-)\cdot \normal_{TF} +  \mathbb{F}(\vW^+, b^+)\cdot \normal_{TF}  - a(\vW^+-\vW^-)),
\eeq
with $a= \max\limits_{T\in\Th} \lambda_T$ and
\beq\label{lambda}
\lambda_T =  \max\limits_{\partial T} \left( \left | \frac{\vq_{\h\vert T}}{\eta_{\h\vert T} - b_{\h\vert T}} \cdot \normal_{T} \right | +\sqrt{g(\eta_{\h\vert T} - b_{\h\vert T})} \right).
\eeq
\item $\widetilde{\mathbb{F}}_{TF}$ is a correction term defined as follows:
\begin{equation}
\widetilde{\mathbb{F}}_{TF} = \begin{pmatrix} 0 & 0 \\ g\check{\eta}^-(\check{b} - b^-) & 0 \\0 & g\check{\eta}^-(\check{b} -b^-)\end{pmatrix}\cdot \normal_{TF}.
\end{equation}
\end{enumerate}
\noindent
Note that the modified interface flux (\ref{num:flux}) only induces perturbations of order $k+1$ when compared to the traditional interface fluxes. 
\begin{remark}\label{rem:LF}
Let denote in the following by $w_{T}$ the averaged value of the discrete approximation $w_{\h\vert T}$ on the element $T$, for any $\R$ or $\R^2$-valued function $w$.  Let consider a first order scheme for the averaged free-surface:
\begin{equation}\label{schemaDGave2}
\eta^{n+1}_{T} =  \eta^{n}_{T} - \frac{\Delta t^n}{\vert T\vert}\sum\limits_{F\in\cF_T} \int_{F} \mathbb{F}_\h(\check{\vW}^{-}, \check{\vW}^{+}, \check{b}, \check{b},\normal_{TF}) \,ds,
\end{equation}
with $\mathbb{F}_\h$ defined following (\ref{LFflux}) and $\check{\vW}^{-}$, $\check{\vW}^{+}, \check{b}$ obtained from (\ref{topo:check})-(\ref{ext_int}), starting from $\vW^{-}$ and $\vW^{+}$ defined respectively, for each face $F$, as the first-order \textit{piecewise constant}  interior and exterior values $\vW_{T_1}^n$ and $\vW_{T_2}^n$, with $\cT_F=\{T_1, T_2\}$.\\
Then, assuming that $h^{n}_{T}\geq 0,\;\forall T\in \cT_\h$, we have $h^{n+1}_{T}\geq 0$ $\forall T\in \cT_\h$, under the condition
\beq\label{cfl:firstorder}
\lambda_T  \frac{ \mathfrak{p}_T}{\vert T\vert}  \Delta t^n \leq 1,\;\;\forall T\in \cT_\h.
\eeq
This is a positivity property of the \textit{Lax-Friedrichs} flux extended to the \textit{pre-balanced} formulation, which is shown in \cite{duran:marche:dg} and is mandatory to obtain a positive high-order DG scheme, see \cite{Zhang20103091}.
\end{remark}

\subsubsection{Positivity}\label{dry}

The enforcement strategy of the water height non-negativity within the DG formulation \cite{xing:2013} requires positivity of the water height at carefully chosen quadrature nodes at the beginning of each time step. Then, the positivity of the water height is ensured providing the use of a positivity preserving first order scheme, like (\ref{LFflux}) (see Remark \ref{rem:LF}) and a suitable time-step restriction.\\
We adjust these ideas to the $\left(\mathcal{PCG}_\alpha\right)$ formulation (\ref{pb:disp:1c})-(\ref{pb:disp:2c}) to enforce the mandatory property that the mean value of the water height $h_{T}=\eta_T-b_T$ on any given element $T\in\Th$ remains positive during the time marching procedure. The main ideas are summarized for an explicit first order \textit{Euler} scheme in time for the sake of simplicity:
\begin{enumerate}
\item[$\sharp$ 1]  considering the broken polynomial space $\mathbb{P}^{k}(T)$, we assume that the face integrals are computed using $(k+1)$-points \textit{Gauss} quadrature (see Remark \ref{quad:pre}). The special quadrature rules introduced in \cite{Zhang:2012fk} are obtained by a transformation of the tensor product of a $\beta$-points \textit{Gauss-Lobatto} quadrature (with $\beta$ the smallest integer such that $2\beta-3\geq k$) and the $(k+1)$-point \textit{Gauss} quadrature. This new quadrature includes all $(k+1)$-point Gauss quadrature nodes for each face $F\in \cF_T$, involves positive weights and it is exact for the integration of $\eta^{n}_{\h}$ over $T$. In the following, let us denote $S_{T}^k$ the set of points of this quadrature rule. We show on Figure \ref{quadraturepoints} the resulting quadrature nodes used for $k=2$ and $k=3$ orders of approximation on a reference element. 

\item[$\sharp$ 2]  for each element $T\in\Th$,  $h_{\h\vert T}^n$ is computed from $\eta_{\h\vert T}^n$ and $b_{\h\vert T}$ and we need to ensure that $h_{\h\vert T}^n(\vx) \geq 0,\; \forall \vx\in S_{T}^k$, which is a sufficient condition to ensure the non-negativity property for the DG scheme (\ref{discrete:2}), under the CFL-like condition \eqref{cfl2}.\\
This condition is enforced using the accuracy preserving limiter of \cite{Zhang20103091}. Assuming $h_{T}^{n}\geq 0$, we replace $h_{\h\vert T}^n$ by a conservative linear scaling around this element average:
\beq\label{limiter:robuste}
\hat{h}_{\h\vert T}^n = \theta_{T}(h_{\h\vert T}^n - h_{T}^{n}) + h_{T}^{n},
\eeq
with
$$
\theta_{T} = \min \left( \frac{h_{T}^{n}}{h_{T}^{n} -m_{T}}  ,1\right), \;\;\;\;\mbox{and}\;\;\;\;
m_{T} = \min\limits_{\vx\in S_{T}^k} \;h_{\h\vert T}^n(\vx).
$$
\end{enumerate}
\begin{remark}
Note that this approach also ensures that the water height remains positive at the $k+1$ \textit{Gauss} quadrature nodes used to compute the faces integrals in \eqref{Weak_formulation2}.
\end{remark}

\begin{figure}
\begin{center}
\includegraphics[scale=0.22,angle=-90]{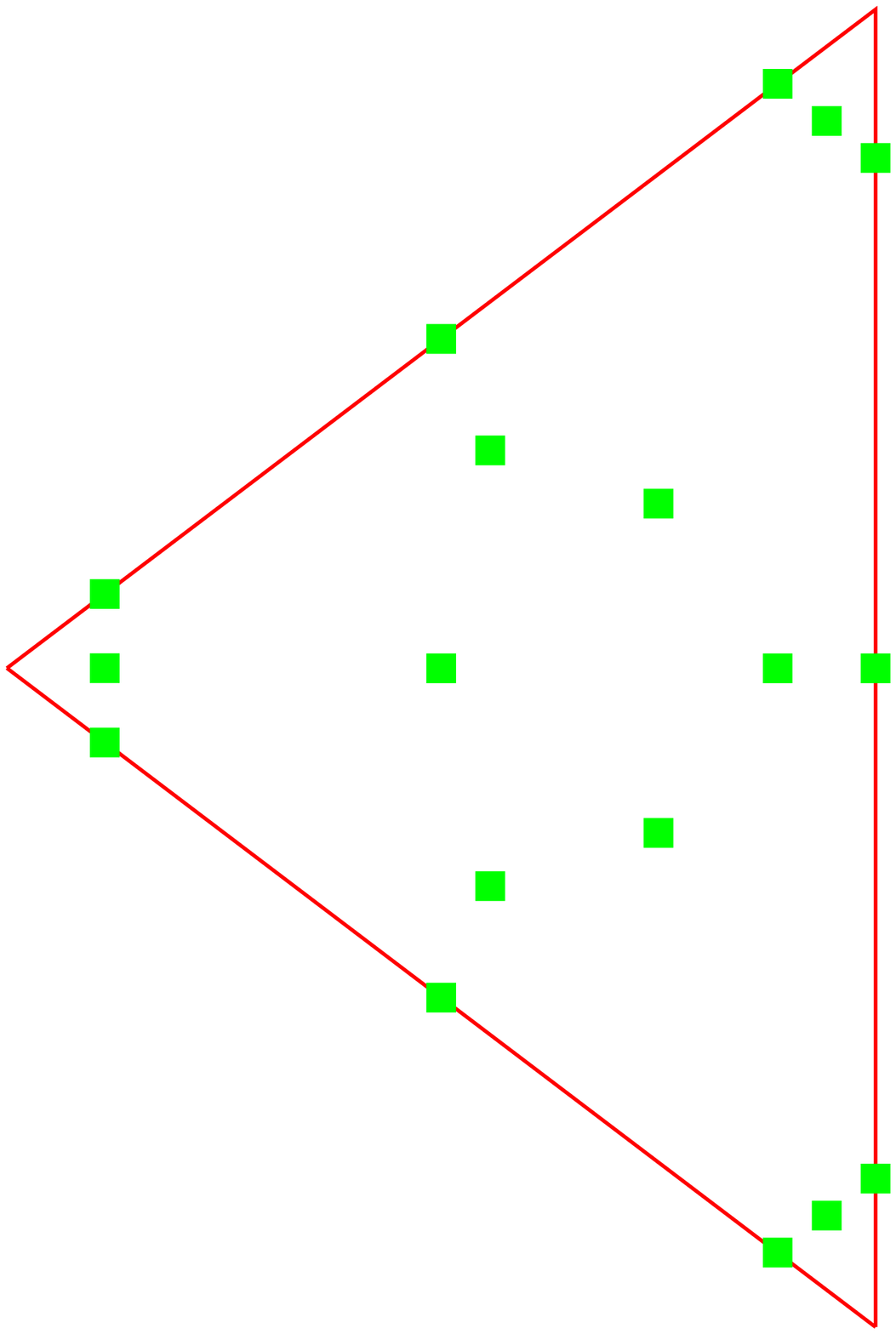}
\includegraphics[scale=0.22,angle=-90]{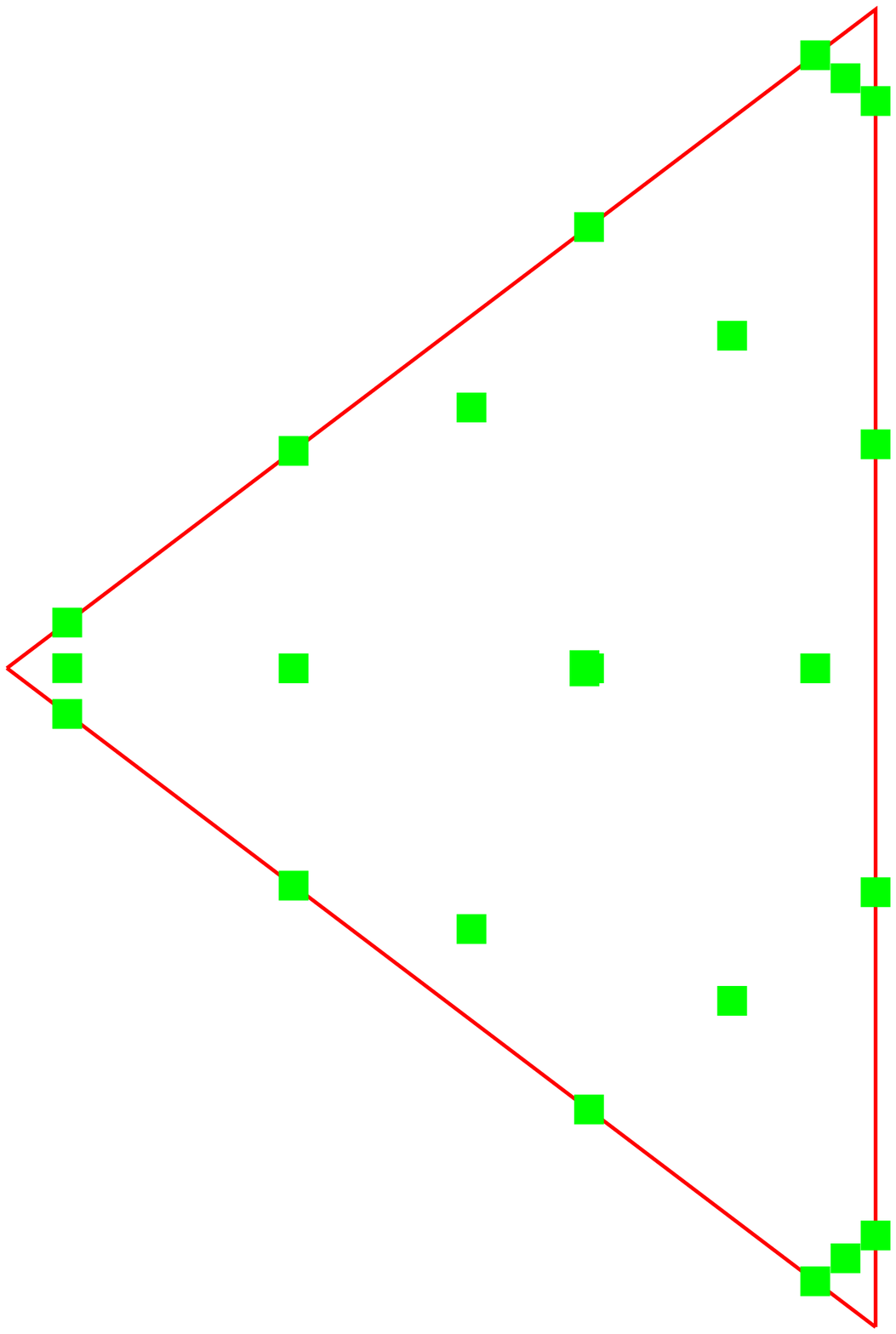}
\caption{Nodes locations for the special \textit{Zhang and Shu} quadrature - $\mathbb{P}^2$ and $\mathbb{P}^3$ cases.}
\label{quadraturepoints}
\end{center}
\end{figure}

\subsection{Discrete formulations for the second order elliptic sub-problems}\label{sect:disp}

We are now left with the computation of the projection $\mathbb{D}(\vW_{\h},b_\h)$ of the dispersive correction on the approximation space $X_\h$. As its first component identically vanishes, the computation of the discrete version of $\mathcal{D}_c$ is obtained as the solution of the discrete problems associated with
\begin{subnumcases}{}
\big[1\!+\!\alpha\mathds{T}[h_b]\big]\mathcal{K} =	gh\nabla\eta,\label{subell1}\\
\big[1\!+\!\alpha\mathds{T}[h_b] \big](\mathcal{D}_c + \frac{1}{\alpha} gh\nabla\eta)= h(\frac{1}{\alpha} g\nabla\eta + \cQ_1[h,b](\vv) +g\cQ_2[h,b](\eta)) +\cQ_3[h, h_b]\mathcal{K}.\label{subell2}
    \end{subnumcases}    
    
Note that both problems ultimately reduce to the construction of a discrete formulation associated with the generic scalar problem 
$$\big[1\!+\!\alpha\mathds{T}[h_b]\big] w = f,$$ 
where the source term $f$ is successively defined as the directional scalar components of the source terms occurring in \eqref{subell1}-\eqref{subell2}, that is to say
\begin{equation}\label{source:term:f}
f=
\begin{cases}
gh\nabla\eta\;\; \mbox{for}\; \eqref{subell1}\\
\displaystyle\frac{1}{\alpha} gh\nabla\eta + h(\cQ_1(\vv) +g\cQ_2(\eta)) +\cQ_3[h, h_b]\mathcal{K}\;\;\mbox{for}\; \eqref{subell2}, 
\end{cases}
\end{equation}
where we keep a vectorial notation for the sake of simplicity.
Remarking now that the following identity holds for any smooth enough scalar-valued function:
\begin{equation}\label{struct:div}
\mathds{T}[h_b]w = -\frac{1}{3}\nabla\cdot\left(H^b\nabla w \right) + \frac{1}{6}\nabla\cdot\left( w\nabla H^b\right),
\end{equation}
with the notation $H^b=h_b^2$, we consider the following mixed formulation, introducing a diffusive flux $\vp$:
\be\label{mixed:disp}
\begin{cases}
w-\alpha\nabla\cdot\left(\frac{1}{3} H^b \vp - \frac{1}{6}w\nabla H^b\right) = f,\\
\vp - \nabla w = 0,
\end{cases}
\ee
and the following associated discrete problem: find $(w_\h, {}^t\vp_\h)$ in $X_\h$ such that
\begin{align}
&\int_{T} w_\h\pi_{\h}d\vx + \frac{\alpha}{3}\int_{T} H^b_\h (\vp_\h\cdot\nabla \pi_{\h})d\vx - \frac{\alpha}{3}\int_{\partial T} H^b_\h (\vp_\h\cdot \normal_T)\pi_\h\,ds\,\label{discrete:1T}\\&\hspace{2cm}-\frac{\alpha}{6}\int_T w_\h(\nabla H^b_\h\cdot\nabla\pi_\h)\,d\vx +\frac{\alpha}{6}\int_{\partial T} w_\h(\nabla H^b_\h\cdot\normal_T)\pi_\h\,ds = \int_T f_\h\pi_\h\,d\vx,\nonumber\\
&\int_{T} \vp_\h\cdot\phi_{\h}d\vx + \int_T w_\h\nabla\cdot\phi_\h\,d\vx - \int_{\partial T} w_\h(\phi_\h\cdot\normal_T)\,ds= 0,\label{discrete:2T}
\end{align}
for all $(\pi_\h,\phi_\h)\in X_\h$, where $H^b_\h$ and $\nabla H^b_\h$ respectively denote the $L^2$ projections of $h_b^2$ and $\nabla h_b^2$ on $\mathbb{P}^{k}(\Th)$ and $(\mathbb{P}^{k}(\Th))^2$, and $f_\h$ is a polynomial representation of $f$ in $\mathbb{P}^{k}(\Th)$, with $f$ defined according to \eqref{source:term:f}. We choose to use the \textit{Local Discontinuous Galerkin (LDG)} approach \cite{CockburnShu:1998aa} to compute the associated stabilizing fluxes. For any given element $T\in\Th$, we have: 
\begin{align}
&\int_{\partial T} H^b_\h (\vp_\h\cdot \normal_T)\pi_\h\,ds=\sum\limits_{F\in\cF_T}\int_{F} H^b_\h (\widehat{\vp}_{TF}\cdot \normal_{TF})\pi_h\,ds,\\
&\int_{\partial T} w_\h(\phi_\h\cdot\normal_T)\,ds = \sum\limits_{F\in\cF_T}\int_{F}\widehat{w}_{TF}(\phi_\h\cdot\normal_{TF})\,ds,\\
&\int_{\partial T} w_\h(\nabla H^b_\h\cdot\normal_T)\pi_\h\,ds= \sum\limits_{F\in\cF_T}\int_{F} \widehat{w}_{TF} (\nabla H^b_\h\cdot\normal_T)\pi_\h\,ds,
\end{align}
where the interface fluxes are computed as follows:
\begin{align}
&\widehat{w}_{TF} = \avg{w_\h} + (\bob\cdot\normal_{TF})\jump{w_\h},\label{flux:ldg:1}\\
&\widehat{\vp}_{TF} = \avg{\vp_\h} - \bob(\jump{\vp_\h}\cdot\normal_{TF}) + \frac{\xi}{\vert F\vert}\jump{w_\h}\normal_{TF},\label{flux:ldg:2}
\end{align}
with the classical notations for the face average $\avg{w} =(w^{+}+w^{-})/2$ and face jump $\jump{w}= w^{+}-w^{-}$, and $w^{-}$ and $w^{+}$ stand respectively for the interior and exterior traces of $w_{\h}$ with respect to the considered face $F$ (and similar notations for the $\R^2$-valued quantity $\vp$). The penalization parameter $\xi$ and the upwinding parameter $\bob$ are both set to $1$.
\begin{remark}
The choice of the \textit{LDG} fluxes (\ref{flux:ldg:1})-(\ref{flux:ldg:2}) allows to eliminate locally the auxiliary discrete flux $\vp_\h$, and to globally assemble the matrix corresponding to the discrete formulation of $\big[1\!+\!\alpha\mathds{T}[h_b]\big]$. Note also that, for the sake of simplicity,  we also use the upwind flux (\ref{flux:ldg:1}) to approximate the faces contributions associated with the purely advective part of (\ref{struct:div}). 
\end{remark}
\begin{remark}
The relative simplicity of the discrete formulation (\ref{discrete:1T})-(\ref{discrete:2T}) is a direct consequence of the simplified analytical structure of the $\left(\mathcal{PCG}_\alpha\right)$ formulation. One can of course adapt this approach to the initial formulation (\ref{eq6imphhVbis}), leading in practice to a more costly and complicated global assembling process, as the operator $\cT[h,b]$ directly acts on a $\R^2$-valued function.
\end{remark}

For the sake of efficiency, the computations of the source terms integrals $\int_T f_\h\pi_\h\,d\vx$ occurring in \eqref{discrete:1T}  are performed in a collocation way, in the spirit of pseudo-spectral methods. More precisely, we build approximated integrands with polynomial representations $(\cQ_{j,\h})_{j=1,2,3}$ in $\left( \mathbb{P}^{k}(\Th)\right)^2$ of the operators $(\cQ_j)_{j=1,2,3}$ using direct products of the discrete flow variables $(\eta_\h, \vv_\h)$ and their derivatives at the \textit{Fekete} nodes. These derivatives are however weakly computed with the DG approach, using LDG fluxes. In practice, this is simply achieved by adjusting the approach of \cite{DuranMarche:2014ab} to the $d=2$ case to easily compute the required first and second order derivatives of $\eta_\h$ and $\vv_\h$ in each space direction. Again, the second order spatial derivatives are written in mixed form  and we use the stabilizing fluxes \eqref{flux:ldg:1}-\eqref{flux:ldg:2}. The corresponding discrete formulations are simplified through local elimination of the diffusive fluxes, leading to the assembling of global matrices for first and second order weak derivatives in each direction. For instance, considering derivatives in the first direction, the mixed form
\begin{equation*}
v + \partial_x w = 0 \quad , \quad u + \partial_x v = 0,\, 
\end{equation*}
leads to the following local discrete formulations for $1\leq j\leq N_k$:
\begin{equation}\label{LDG_scheme}
\begin{split}
\sum\limits_{i=1}^{N_k} \tilde{v}_i  M_{ij} & =
\sum\limits_{i=1}^{N_k} \tilde{w}_i S_{ij}^x -\sum\limits_{F\in\cF_T} \int_{F}  \widehat{w}_{TF} \phi_{j} n_{TF}^x ds\, , \\
 \sum\limits_{i=1}^{N_k} \tilde{u}_i M_{ij} & =
\sum\limits_{i=1}^{N_k} \tilde{v}_i S_{ij}^x -\sum\limits_{F\in\cF_T} \int_{F} \widehat{v}_{TF} \phi_{j} n_{TF}^x\, ds,
 \end{split}
\end{equation}
where $n_{TF}^x$ is the first component of $\normal_{TF}$ and with
\begin{equation*}
M_{ij} := \int_{T} \phi_{i} \phi_{j}\,d\vx \quad , \quad S^x_{ij} := \int_{T} \phi_{i} \partial_x \phi_{j}\, d\vx.
\end{equation*}
These systems can be globally rewritten as follows:
\begin{equation}\label{LDG_Matrice}
\begin{split}
&\bM \tilde{V} = \bS_{x} \tilde{W} - \left(\bE_{x} - \bF_{x} \right) \tilde{W} \, , \\
&\bM \tilde{U} = \bS_{x} \tilde{V} - \left(\bE_{x} + \bF_{x} \right) \tilde{V} - \frac{\xi}{\h} \bF_{x}  \tilde{W},
\end{split}
 \end{equation} 
where: 
 \begin{inparaenum}[(i)]
 \item $\tilde{U}, \tilde{V}$ and $\tilde{W}$ are $N_k\times \card \Th$ vectors gathering the  expansion coefficients $\tilde{u}, \tilde{v}$ and $\tilde{w}$ for all mesh elements, 
 \item $\bM$ and $\bS_x$ are the square $\card{\Th} \times N_k$ global mass and stiffness matrices, with a block-diagonal structure,
 \item $\bE_x$ and $\bF_x$ are square $\card{\Th} \times N_k$ matrices, globally assembled by gathering all the mesh faces contributions, accounting respectively for the average $\avg{\cdot}$ and jump $\jump{\cdot}$ operators occurring in the definition \eqref{flux:ldg:1}-\eqref{flux:ldg:2} of the \textit{LDG} fluxes.
 \end{inparaenum} Note that each interface $F\in\cF^i_\h$ contributes to four blocks of size $N_k$ in the global matrices and each boundary face $F\in\cF^b_\h$ contributes to one block. Equipped with these global structures, first and second order global differentiation matrices can be straightforwardly assembled, leading to the following compact notations:
\begin{align}
&\tilde{V} = \bD_x \tilde{W},\;\;\mbox{with}\;\;\bD_x = \bM^{-1} \left(\bS_x - \bE_x + \bF_x \right),\\
&\tilde{U} = \bD_{xx} \tilde{W},\;\;\mbox{with}\;\; \bD_{xx} = \bM^{-1} \Big(\left(\bS_x-\bE_x-\bF_x\right)\bD_x - \frac{\xi}{\h} \bF_x \Big).
\end{align}
Similar constructions are performed for $\bD_y$ and $\bD_{yy}$. 

\begin{remark}
The use of such a direct nodal products method helps to reduce the computational cost by reducing the accuracy of quadrature but may threaten the numerical stability for strongly nonlinear or marginally resolved problems with the possible introduction of what is known in the field of spectral methods as aliasing driven instabilities. Several well-known methods may help to alleviate this issue, like the use of stabilization filtering methods. In this work however, and considering the test cases studied in \S\ref{section:numerical}, we did not need to use any additional stabilization mechanism. 
\end{remark}

\subsection{Time-marching, boundary conditions and wave-breaking}

\subsubsection{Time discretization}\label{Time}
The time stepping is carried out using the explicit third-order SSP-RK scheme \cite{gottliedtadmor}. Up to $k=3$, we consider RK-SSP schemes of order $k+1$. A fourth order SSP-RK scheme is used for $k\geq 3$. For instance, writing the semi-discrete equations as $\frac{d}{dt}\vW_{\h}+\mathcal{A}_{\h}(\vW_{\h}) = 0$, advancing from time level $n$ to $n + 1$ is computed as follows with the third-order scheme:
\begin{equation} \label{RK3}
\left \lbrace \begin{array}{ll}
\vspace{0.2cm}
\vW_{\h}^{n,1}=\vW_{\h}^{n}- \Delta t^n \tilde{\mathcal{A}}_{\h}(\vW_{\h}^{n}) \, , \\
\vspace{0.2cm}
\vW_{\h}^{n,2}=\frac{1}{4}(3\vW_{\h}^{n}+\vW_{\h}^{n,1})- \frac{1}{4}\Delta t^n \tilde{\mathcal{A}}_{\h}(\vW_{\h}^{n,1}) \, ,\\
\vW_{\h}^{n+1}=\frac{1}{3}(\vW_{\h}^{n}+2\vw_{\h}^{n,2})- \frac{2}{3}\Delta t^n \tilde{\mathcal{A}}_{\h}(\vW_{\h}^{n,2}) \, .
 \end{array} \right. 
\end{equation}
with the formal notation $\tilde{\mathcal{A}}_\h = \mathcal{A}_\h \circ \Lambda\Pi_\h$, $\vW_{\h} \leftarrow \Lambda\Pi_\h \vW_{\h}$ being the limitation operator possibly acting on the approximated vector solution  (see \S\ref{Limitation}), and $\Delta t^n$ is obtained from the CFL condition \eqref{cfl2}.

\subsubsection{Boundary conditions}\label{boundary}

For the test cases studied in the next section, boundary conditions are imposed weakly, by enforcing suitable reflecting relations at virtual exterior nodes, at each boundaries, allowing to compute the corresponding interface stabilizing fluxes.  Solid-wall (reflective) but also periodic conditions (as the computational domains geometries of the cases studied in \S\ref{section:numerical} are rectangular) can be enforced following this simple process. \\
These simple boundary conditions possibly have to be complemented with \textit{ad-hoc} absorbing boundary conditions,  allowing the dissipation of the incoming waves energy together with an efficient damping of possibly non-physical reflections, and generating boundary conditions that mimic a wave generator of free surface waves. We have implemented relaxation techniques and we enforce periodic waves combined with generation/absorption by mean of a generation/relaxation zone, following the ideas of \cite{madsen2003}, using the relaxation functions described in \cite{Wei:1995p885}, and the computational domain is locally extended to include sponge layers which may also include a generating layer. The length of these layers has to be calibrated from the incoming waves (generally 2 or 3 wavelengths).

\subsubsection{Wave-breaking and limiting}\label{Limitation}
Obviously, vertically averaged models cannot reproduce the surface wave overturning and are therefore inherently unable to fully model wave breaking. Moreover, if the GN equations can accurately reproduce most phenomena exhibited by non-breaking waves in finite depth, including the steepening process occurring just before breaking, they do not intrinsically account for the energy dissipation mechanism associated with the conversion into turbulent kinetic energy observed during broken waves propagation. \\
Several methods have been proposed to embed wave breaking in depth averaged models.  Many of them focus on the inclusion of an energy dissipation mechanism through the activation of extra terms in the governing equations when wave breaking is likely to occur, and the reader is referred to \cite{tissier2} for a recent review of these approaches. More recently, hybrid strategies have been elaborated for weakly nonlinear models \cite{Tonelli:2009p1244, Kazolea:2014kx} and for fully nonlinear models \cite{Bonneton20111479, tissier2, tissier1, shi:2012}. Roughly speaking, the idea is to switch from GN to NSW equations when the wave is ready to break by locally suppressing the dispersive correction and the various approaches may differ by the level of sophistication of the detection criteria and the switching strategies.\\ 
Denoting that, from a numerical point of view, such an approach allows to avoid numerical instabilities by turning off the computation of higher-order derivatives and nonlinear and non-conservative terms in the vicinity of appearing singularities, we use in \cite{DuranMarche:2014ab} a purely numerical smoothness detector to identify the potential instability areas, switch to NSW equations in such areas by simply locally neglecting the $\mathcal{D}_c$ source term and use a limiter strategy to stabilize the computation, letting breaking fronts propagate as moving bores.\\
As the aim of the present work is only to introduce and validate our fully discontinuous discrete formulation in the multidimensional case, we do not focus on the development of new wave breaking strategies and we simply adjust this simple approach to the $d=2$ case to possibly stabilize the computations performed in \S\ref{section:numerical}. \\
We detect the troubled elements (following the terminology of \cite{Qiu:siam:2005}) using the criterion proposed in \cite{Krivodonova2004323}, and based on the strong superconvergence phenomena exhibited at element's outflow boundaries. More precisely, for any $T\in\cT_\h$, denoting by $\partial T_{in}$ the inflow part of $\partial T$, we use the following criterion:
\begin{equation}\label{sensor}
\mathbb{I}_T= \frac{\sum\limits_{F\in\partial T_{in}}\displaystyle\int_{F}(h^- - h^+)\,ds}{\h_T^{(k+1)/2}\vert \partial T_{in}\vert  \| h_{\h\vert T}\| },
\end{equation}
which has already successfully been used as a \textit{troubled cells} detector in purely hyperbolic shallow water models, see among others \cite{Ern:2008p3536, kesser2012, zhu:2013}.\\
If $\mathbb{I}_T\geq 1$ then we apply a slope limiter on each scalar component of $\vW_{\h\vert T}$, based on the \textit{maxmod} function, see \cite{Burbeau2001111}. This limiting strategy is not recalled here, as we straightforwardly reproduce the implementation described in \cite{duran:marche:dg} for the NSW equations in \textit{pre-balanced} form. Of course, waves about to break do not embed any free surface singularities yet, but our numerical investigations have shown that when the wave has steepened enough, the free surface gradient becomes large enough to activate the criteria. This simple approach, although quite rough and far less sophisticated than recent strategies introduced for instance in \cite{tissier2, Kazolea:2014kx}, allows us to obtain good results in the various  cases of \S\ref{section:numerical}.

\subsection{Main properties}
\noindent
We have the following result:

\begin{proposition} \label{well_balancing}
The discrete formulation (\ref{Weak_formulation2}) together with the interface fluxes discretization (\ref{num:flux}) and a first order \textit{Euler} time-marching algorithm has the following properties:
\begin{enumerate}
\item it preserves the motionless steady states, providing that the integrals of \eqref{Weak_formulation2} are exactly computed for the motionless steady states. In other terms, we have for all $n\in\N$:
\begin{equation} \label{WB} \Big( \left \lbrace \begin{array}{ll}
\eta_{\h}^{n} \equiv \eta^e \\
\textbf{q}_{\h}^{n} \equiv  0 \end{array} \right. \Big) \Rightarrow \quad \Big(\left \lbrace \begin{array}{ll}
\eta_{\h}^{n+1} \equiv \eta^e \\
\textbf{q}_{\h}^{n+1} \equiv 0 \end{array} \right. \Big),
\end{equation}
with $\eta^e$ constant,
\item assuming moreover that $h_{T}^{n}  \geq  0,\;\forall T\in\cT_\h$  and $h_{\h\vert T}^n(\vx)\geq 0,\;\forall \vx\in S_{T}^k,\;\forall T\in\cT_\h$, then we have $h_{T}^{n+1}  \geq 0, \;\forall T\in\cT_\h$
under the condition
\beq\label{cfl2}
\lambda_T  \frac{ \mathfrak{p}_T}{\vert T\vert} \Delta t^n \leq \frac{2}{3} \hat{\omega}_1^\beta,
\eeq
where $\hat{\omega}_1^\beta$ is the first quadrature weight of the $\beta$-point \textit{Gauss-Lobatto} rule used in \S\ref{dry}.
\end{enumerate}
\end{proposition}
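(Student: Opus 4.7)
The two items of Proposition \ref{well_balancing} are largely independent and I would treat them in sequence. For the well-balancing property, the plan is to verify that at a motionless state $(\eta^e,0)$ each of the three kinds of terms appearing in \eqref{Weak_formulation2} behaves appropriately: the dispersive correction $\mathbb{D}(\vW_\h,b_\h)$ vanishes identically, the continuity residual is trivially zero, and the momentum residual reduces to an NSW-type well-balancing identity already known to hold.

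To prove $\mathbb{D}(\vW_\h,b_\h)\equiv 0$, I would chase through the elliptic sub-problems \eqref{subell1}--\eqref{subell2}. With $\eta\equiv\eta^e$ and $\vv\equiv 0$, the source $gh\nabla\eta$ of \eqref{subell1} vanishes; since the LDG discretization \eqref{discrete:1T}--\eqref{discrete:2T} of $1+\alpha\mathds{T}[h_b]$ inherits the coercivity of the continuous operator, its unique discrete solution is $\mathcal{K}_\h\equiv 0$. The source of \eqref{subell2} then also vanishes because $\cQ_1(\vv)\equiv 0$, $\cQ_2(\eta)\equiv 0$ (since $\nabla\eta=0$) and $\cQ_3[h,h_b]\mathcal{K}\equiv 0$, whence $\mathcal{D}_{c,\h}+\frac{1}{\alpha}gh\nabla\eta\equiv 0$ and therefore $\mathcal{D}_{c,\h}\equiv 0$.

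The continuity equation then trivially yields $\frac{d}{dt}\tilde\eta_i=0$ for all $i$, since $\vq_\h\equiv 0$ kills all volume and interface contributions. For the momentum equation, the key observation is that for each interior face $F$ the modified reconstructions \eqref{topo:check}--\eqref{ext_int} yield $\check\eta^-=\check\eta^+=\eta^e$ with vanishing modified momenta and a common bottom value $\check b$, so the numerical dissipation in \eqref{LFflux} cancels and $\mathbb{F}_\h$ reduces to the centered flux at this common state. A direct computation then shows that $\mathbb{F}_\h+\widetilde{\mathbb{F}}_{TF}$ exactly balances the volume contribution $-\int_T\frac{1}{2}g\nabla(\eta^2-2\eta b)\cdot\nabla\phi_j\,d\vx$ and the topography source $-\int_T g\eta\nabla b\,\phi_j\,d\vx$, using $\frac{1}{2}g\nabla(\eta^2-2\eta b)\equiv -g\eta^e\nabla b$. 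This cancellation, proved for the NSW system in \cite{duran:marche:dg}, transfers verbatim here since the dispersive source has been shown to vanish.

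For the positivity property, I would follow the Zhang--Shu framework adapted to the pre-balanced formulation. Taking $\phi_j\equiv 1$ in the first component of \eqref{Weak_formulation2} yields the first-order Euler update of the cell average $\eta_T^{n+1}$ as a weighted combination of interface flux integrals. Expanding these integrals using the Zhang--Shu quadrature on $S_T^k$, I would rearrange the resulting expression for $h_T^{n+1}=\eta_T^{n+1}-b_T$ as a convex combination, with strictly positive weights, of elementary first-order Lax-Friedrichs updates performed at each of the $(k+1)$ Gauss points on each face, starting from piecewise-constant data reconstructed from the Zhang--Shu nodal values. By Remark \ref{rem:LF}, each such elementary update preserves non-negativity under a local CFL of the form \eqref{cfl:firstorder}; the factor $\frac{2}{3}\hat{\omega}_1^\beta$ in \eqref{cfl2} arises because the smallest weight in the convex decomposition along the outward normal is $\hat{\omega}_1^\beta$, and the factor $\frac{2}{3}$ reflects the sharing of the constraint between the three face contributions on a triangle. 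The main technical obstacle is the explicit bookkeeping of this convex decomposition on simplices: although conceptually standard \cite{Zhang:2012fk, duran:marche:dg}, it requires a careful handling of the Zhang--Shu weights distributed over the three faces together with the Fekete nodal structure, and it is where the precise constant in \eqref{cfl2} has to be traced.
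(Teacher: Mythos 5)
Your proposal is correct and follows essentially the same route as the paper: the well-balancing part rests on the vanishing of $\cQ_{1,\h}$, $\cQ_{2,\h}$, $\mathcal{K}_\h$ and hence of $\mathcal{D}_{c,\h}$, together with the consistency of the numerical flux at the reconstructed common state $\check{\vW}^-=\check{\vW}^+$, while the positivity part defers, exactly as the paper does, to the convex-decomposition argument of \cite{Zhang:2012fk} adapted to the pre-balanced setting in \cite{duran:marche:dg}, using that the dispersive source does not enter the mass equation. The only cosmetic difference is that you balance the flux, volume and topography contributions by a direct computation with $\frac{1}{2}g\nabla(\eta^2-2\eta b)=-g\eta^e\nabla b$, whereas the paper integrates by parts and invokes $\nabla_\h\cdot\mathbb{F}(\vW_\h^e,b_\h)=\mathbb{B}(\vW_\h^e,b_\h)$; these are the same identity.
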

\begin{proof}
\begin{enumerate}
\item assuming that the following equilibrium$\vW_\h=\vW_\h^e=(\eta^e, 0)$ holds, we have to show that $\forall T\in\Th$ and $1\leq j\leq N_k$:
\begin{equation}
\int_{T}\mathbb{F}(\vW_{\h}^e,b_\h)\cdot\nabla \phi_j\,d\vx -    \sum\limits_{F\in\cF_T} \int_{F} \mathbb{F}_{TF}^e\,\phi_j\,ds  - \int_{T}\mathbb{D}(\vW_{\h}^e,b_\h)\phi_j\,d\vx+\int_{T}\mathbb{B}(\vW_{\h}^e,b_\h)\phi_j\,d\vx=0,\\
\label{wb:1}
\end{equation}
where $\mathbb{F}_{TF}^e$ is the interface numerical flux obtained at equilibrium.
Looking at (\ref{num:flux}), and highlighting that for each interface $F$ we have $\check{\eta}^- = \check{\eta}^+=\eta^e$ and therefore $\check{\vW}^{-}=\check{\vW}^{+}$, it is easy to check that $ \mathbb{F}_{TF}^e= \mathbb{F}(\vW^-, b^-)\cdot\normal_{TF}$, thanks to the consistency of the numerical flux function $\mathbb{F}_\h$. Consequently, we have
\begin{align}\label{wb:4}
\int_{T}\mathbb{F}(\vW_{\h}^e,b_\h)\cdot\nabla \phi_j d\vx -    \sum\limits_{F\in\cF_T} \int_{F} \mathbb{F}_{TF}^e\,\phi_j\,ds &= -\int_{T}\nabla\cdot\mathbb{F}(\vW_{\h}^e,b_\h)\phi_jd\vx,\\
&=-\int_{T} \mathbb{B}(\vW_{\h}^e,b_\h)\phi_jd\vx.
\end{align}
assuming that the integrals are computed exactly (see Remark \ref{quad:pre}), and
observing that we have $\nabla_\h\cdot \mathbb{F}(\vW_{\h}^e,b_\h)=\mathbb{B}(\vW_{\h}^e,b_\h)$. For the integrals associated with the dispersive source term, it is straightforward to check from the definitions \eqref{eq11} and \eqref{Q2CG} of $(\cQ_j)_{j=1,2}$ that $\cQ_{1,\h}=0$ whenever $\vq_\h=0$ and that $\cQ_{2,\h}=0$ whenever $\eta_\h=\eta^e$. Moreover, using again the fact that $\nabla_\h\eta^e=0$, the polynomial representation $\mathcal{K}_\h$ of $\mathcal{K}$, obtained as the solution of the discrete problem \eqref{discrete:1T}-\eqref{discrete:2T} associated with \eqref{subell1}, taking $f_\h=0$ as source term and suitable boundary conditions, identically vanishes leading to $\cQ_{3,\h}=0$ using \eqref{defS}. Note that this result holds no matter what method we use to compute polynomials representations $(\cQ_{j,\h})_{j=1,2,3}$. 

\item the proof of \cite{Zhang:2012fk}, adjusted to the pre-balanced framework in\cite{duran:marche:dg}, can be straightforwardly reproduced, as the dispersive source term $\mathbb{D}(\vW_{\h},b_\h)$ has no influence on the mass conservation equation.
Note that such positivity preservation can be extended to the third order SSP scheme of \S\ref{Time}, see \cite{Zhang:2012fk}.
\end{enumerate}
\end{proof}
\begin{remark}\label{quad:pre}
The face integrals of \eqref{Weak_formulation2} can be \textit{exactly} computed at motionless steady states with a $k+1$ point Gauss quadrature rule thanks to the \textit{pre-balanced} splitting \eqref{pre:bal:split}. Indeed, with the resulting formulation of the advective flux \eqref{flux:pre:bal}, the integrands on faces are of order $2k$ at equilibrium. In the same way, the cubature rules used for the surface integrals have to be exact only for bivariate polynomials of order $2k-1$. 
\end{remark}


\section{Numerical validations}\label{section:numerical}
In this section, we validate the previous discrete formulation through several benchmarks. Unless stated otherwise, we use periodic boundary conditions in each direction, we set $\alpha=1.159$, $\eps_0=0.1$ and the time step restriction is computed according to \eqref{cfl2}. Accordingly with the non-linear stability result of the previous section, we do not suppress the dispersive effects in the vicinity of dry areas. Some accuracy analysis are performed in the first two cases using the broken $L^2$ norm defined as follows for any arbitrary scalar valued piecewise polynomial function $w_\h$ defined on $\Th$:
$$
\norm{w_\h}_{L^2(\Th)} = \left( \sum\limits_{T\in\Th}\norm{w_{\h\vert T}}_{L^2(T)}^2 \right)^{\frac{1}{2}}.
$$

\subsection{Preservation of motionless steady state}
This preliminary test case is devoted to check the ability of the formulation to preserve motionless steady states and accuracy validation. The computational domain is the [-1,1] $\times$ [-1,1] square, and we use an unstructured mesh of $8466$ elements. The bottom elevation involves a bump and a hollow having same dimensions, respectively located at $\vx_1=(x_1,y_1)=(-\dfrac{1}{3} , -\dfrac{1}{3})$ and  $\vx_2=(x_2,y_2)=(\dfrac{1}{3} , \dfrac{1}{3})$, leading to the following analytic profile :
\begin{equation}
b(r_1,r_2)= 1 + d \, e^{-(r_{1}/L)^{2}} - d \, e^{-(r_{2}/L)^{2}} \, ,
\end{equation}
where $r_{1,2}$ are respectively the distances from $\vx_1$ and $\vx_2$ and we set $d = 0.45$ and $L = 0.15$. The reference water depth is $h_{0} = 1.5\,m$, leading to the configuration depicted on Fig. \ref{WB_fig1}. Our numerical investigations confirm that this initial condition is preserved up to the machine accuracy for any value of polynomial order $k$. For instance, the $L^{2}$ numerical errors obtained at $t=50\,s$ using a $k=2$ approximation are respectively $3.0e$-$16$, $7.4e$-$16$ and $7.9e$-$16$ for $\eta$, $hu$ and $hv$.  \\
Keeping the same computational domain and topography profile, we also perform a convergence study, using a reference solution obtained with $k=4$ and a regular  triangulation with space steps $\Delta x = \Delta y= 2^{-9}\,m$. The initial free surface is set to:
\begin{equation}
\zeta(t=0, \vx) = a \,e^{-(\norm{\vx}/L)^2} \, \quad , \quad a = 0.075\,h_0.
\end{equation}
The computations are performed on a sequence of regular triangular meshes with increasing refinement ranging from $2^{-4}\,m$ to $2^{-9}\,m$ and polynomial expansions of degrees ranging from $k=1$ to $k=4$, while keeping the time step constant and small enough to ensure that the leading error orders are provided by the spatial discretization. The $L^{2}$ errors computed at $t_{\textrm{max}}=0.02\,s$ lead to the convergence curves shown on Fig. \ref{order_h} for the free surface elevation and Fig. \ref{order_q} for the discharge. The corresponding convergence rates obtained by linear regression are reported on each curve. We note that while the $L^2$-errors are typically larger on the discharge than on the free surface, we asymptotically reach some convergence rates between $\mathcal{O}(\h^{k+\frac{1}{2}})$ and $\mathcal{O}(\h^{k+1})$ for both variables. We observe that for this problem, the resulting convergence rates are greater than the $\mathcal{O}(\h^{k+\frac{1}{2}})$ optimal estimate one would be expected, see \cite{ZhangShu:2004aa}.

\begin{figure}
\begin{center}
\includegraphics[scale=0.25,angle=0]{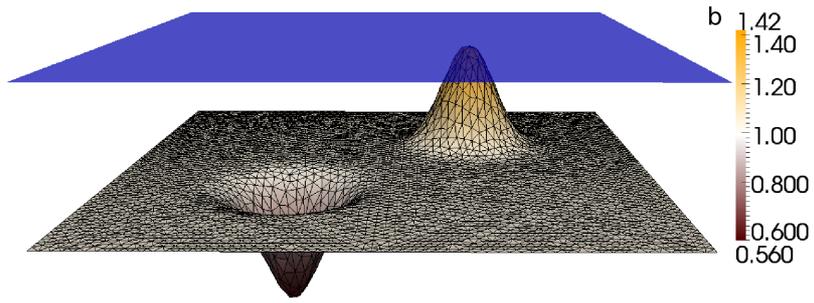} 
\caption{Test 1 - Motionless steady states preservation: topography and free surface}
\label{WB_fig1}
\end{center}
\end{figure}

\begin{figure}
\centering
\begin{tikzpicture}
\begin{loglogaxis}[
    xlabel={$\Delta x$},	
    ylabel={$L^2$-error},
    xtick={0,-1,-2,-3,-4},
    legend pos=south east,
    ymajorgrids=true,
    grid style=dashed,
    yticklabel style = {font=\tiny},
    xticklabel style = {font=\tiny},
]
\addplot[
    color=blue,
    mark=square,
    ] table [y=a, x expr=1/\thisrow{dx}] {conv_h1.txt};
\addplot[
    color=red,
    mark=diamond,
    ] table [y=b, x expr=1/\thisrow{dx}] {conv_h1.txt};
\addplot[
    color=brown,
    mark=o,
    ] table [y=c, x expr=1/\thisrow{dx}] {conv_h1.txt};
\addplot[
    color=magenta,
    mark=star,
    ] table [y=d, x expr=1/\thisrow{dx}] {conv_h4.txt};
    
 \addplot[color=black,
	mark=no,]
	 table[ 
	 x expr=1/\thisrow{dx},
         y={create col/linear regression={y=a,
         variance list={10, 10, 10, 1000}}}]{conv_h1_line.txt}
         coordinate [pos=0.25] (A)
         coordinate [pos=0.1]  (B);
\xdef\slope{\pgfplotstableregressiona}
\draw (A) -| (B)
     node [pos=0.1,anchor=south]
     {\pgfmathprintnumber{\slope}};
      
\addplot[color=black,
	mark=no,]
	table[
         x expr=1/\thisrow{dx},
         y={create col/linear regression={y=b,
         variance list={10, 10, 10,1000}}}]{conv_h1_line.txt}
         coordinate [pos=0.1] (C)
         coordinate [pos=0.25]  (D)
   ;
\xdef\slopeq{\pgfplotstableregressiona}
\draw (C) -| (D)
     node [pos=0.2,anchor=north]
     {\pgfmathprintnumber{\slopeq}};
     
\addplot[color=black, mark=no,]
	 table[
         x expr=1/\thisrow{dx},
         y={create col/linear regression={y=c,
         variance list={10, 10, 10, 1000}}}]{conv_h1_line.txt}
         coordinate [pos=0.25](E)
         coordinate [pos=0.1](F);
        \xdef\slopec{\pgfplotstableregressiona}
        \draw (E) -| (F)
        node [pos=0.2,anchor=south]
       {\pgfmathprintnumber{\slopec}};
     
 \addplot[color=black, mark=no,]
	 table[
         x expr=1/\thisrow{dx},
         y={create col/linear regression={y=d,
         variance list={10, 10, 10, 1000}}}]{conv_h4_line.txt}
         coordinate [pos=0.1](G)
         coordinate [pos=0.23](H);
        \xdef\sloped{\pgfplotstableregressiona}
        \draw (G) -| (H)
        node [pos=0.2,anchor=north]
       {\pgfmathprintnumber{\sloped}};

    \legend{$k=1$,$k=2$,$k=3$,$k=4$}
\end{loglogaxis}

\end{tikzpicture}
\caption{Test 1 - $L^2$-error for the free surface elevation vs. $\Delta x$ for $k=1,2,3$ and $k=4$ at $t_{\textrm{max}}=0.02\,s$. }\label{order_h}
\end{figure}

\begin{figure}
\centering
\begin{tikzpicture}
\begin{loglogaxis}[
    xlabel={$\Delta x$},	
    ylabel={$L^2$-error},
    xtick={0,-1,-2,-3,-4},
    legend pos=south east,
    ymajorgrids=true,
    grid style=dashed,
    yticklabel style = {font=\tiny},
    xticklabel style = {font=\tiny},
]
\addplot[
    color=blue,
    mark=square,
    ] table [y=a, x expr=1/\thisrow{dx}] {conv_u1.txt};
\addplot[
    color=red,
    mark=diamond,
    ] table [y=b, x expr=1/\thisrow{dx}] {conv_u1.txt};
\addplot[
    color=brown,
    mark=o,
    ] table [y=c, x expr=1/\thisrow{dx}] {conv_u1.txt};
\addplot[
    color=magenta,
    mark=star,
    ] table [y=d, x expr=1/\thisrow{dx}] {conv_u4.txt};
    
 \addplot[color=black,
	mark=no,]
	 table[ 
	 x expr=1/\thisrow{dx},
         y={create col/linear regression={y=a,
         variance list={10, 10, 10, 1000, 1000}}}]{conv_u1_line.txt}
         coordinate [pos=0.25] (A)
         coordinate [pos=0.1]  (B);
\xdef\slope{\pgfplotstableregressiona}
\draw (A) -| (B)
     node [pos=0.2,anchor=south]
     {\pgfmathprintnumber{\slope}};
      
\addplot[color=black,
	mark=no,]
	table[
         x expr=1/\thisrow{dx},
         y={create col/linear regression={y=b,
         variance list={10, 10, 10,1000, 1000}}}]{conv_u1_line.txt}
         coordinate [pos=0.1] (C)
         coordinate [pos=0.25]  (D)
   ;
\xdef\slopeq{\pgfplotstableregressiona}
\draw (C) -| (D)
     node [pos=0.2,anchor=north]
     {\pgfmathprintnumber{\slopeq}};
     
\addplot[color=black, mark=no,]
	 table[
         x expr=1/\thisrow{dx},
         y={create col/linear regression={y=c,
         variance list={10, 10, 10, 1000, 1000}}}]{conv_u1_line.txt}
         coordinate [pos=0.22](E)
         coordinate [pos=0.1](F);
        \xdef\slopec{\pgfplotstableregressiona}
        \draw (E) -| (F)
        node [pos=0.2,anchor=south]
       {\pgfmathprintnumber{\slopec}};
     
 \addplot[color=black, mark=no,]
	 table[
         x expr=1/\thisrow{dx},
         y={create col/linear regression={y=d,
         variance list={10, 10, 10, 1000}}}]{conv_u4_line.txt}
         coordinate [pos=0.1](G)
         coordinate [pos=0.23](H);
        \xdef\sloped{\pgfplotstableregressiona}
        \draw (G) -| (H)
        node [pos=0.2,anchor=north]
       {\pgfmathprintnumber{\sloped}};

    \legend{$k=1$,$k=2$,$k=3$,$k=4$}
\end{loglogaxis}

\end{tikzpicture}
\caption{Test 1 - $L^2$-error for the discharge vs. $\Delta x$ for $k=1,2,3$ and $k=4$ at $t_{\textrm{max}}=0.02\,s$}\label{order_q}
\end{figure}

\begin{figure}[H]
\begin{center}
\includegraphics[scale=0.25,angle=0]{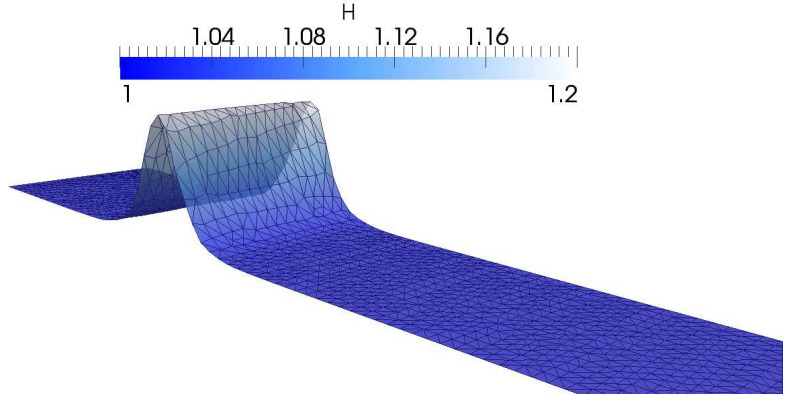} 
\caption{Test 2 - Solitary wave propagation over a flat bottom: initial free surface}
\label{soliton:init}
\end{center}
\end{figure}
\begin{figure}[H]
\centering
\begin{tikzpicture}
\begin{loglogaxis}[
    xlabel={$\Delta x$},	
    ylabel={$L^2$-error},
    xtick={0,-1,-2,-3,-4},
    legend pos=south east,
    ymajorgrids=true,
    grid style=dashed,
    yticklabel style = {font=\tiny},
    xticklabel style = {font=\tiny},
]
\addplot[
    color=blue,
    mark=square,
    ] table [y=a, x expr=1/\thisrow{dx}] {t2_conv_h1.txt};
\addplot[
    color=red,
    mark=diamond,
    ] table [y=b, x expr=1/\thisrow{dx}] {t2_conv_h1.txt};
\addplot[
    color=brown,
    mark=o,
    ] table [y=c, x expr=1/\thisrow{dx}] {t2_conv_h1.txt};
\addplot[
    color=magenta,
    mark=star,
    ] table [y=d, x expr=1/\thisrow{dx}] {t2_conv_h4.txt};
    
 \addplot[color=black,
	mark=no,]
	 table[ 
	 x expr=1/\thisrow{dx},
         y={create col/linear regression={y=a,
         variance list={10, 10, 10, 10, 1000}}}]{t2_conv_h1_line.txt}
         coordinate [pos=0.25] (A)
         coordinate [pos=0.1]  (B);
\xdef\slope{\pgfplotstableregressiona}
\draw (A) -| (B)
     node [pos=0.1,anchor=south]
     {\pgfmathprintnumber{\slope}};
      
\addplot[color=black,
	mark=no,]
	table[
         x expr=1/\thisrow{dx},
         y={create col/linear regression={y=b,
         variance list={10, 10, 10, 10, 1000}}}]{t2_conv_h1_line.txt}
         coordinate [pos=0.1] (C)
         coordinate [pos=0.25]  (D)
   ;
\xdef\slopeq{\pgfplotstableregressiona}
\draw (C) -| (D)
     node [pos=0.2,anchor=north]
     {\pgfmathprintnumber{\slopeq}};
     
\addplot[color=black, mark=no,]
	 table[
         x expr=1/\thisrow{dx},
         y={create col/linear regression={y=c,
         variance list={10, 1000, 10, 10}}}]{t2_conv_h1_line.txt}
         coordinate [pos=0.25](E)
         coordinate [pos=0.1](F);
        \xdef\slopec{\pgfplotstableregressiona}
        \draw (E) -| (F)
        node [pos=0.2,anchor=south]
       {\pgfmathprintnumber{\slopec}};
     
 \addplot[color=black, mark=no,]
	 table[
         x expr=1/\thisrow{dx},
         y={create col/linear regression={y=d,
         variance list={10, 10, 10, 1000}}}]{t2_conv_h4_line.txt}
         coordinate [pos=0.1](G)
         coordinate [pos=0.23](H);
        \xdef\sloped{\pgfplotstableregressiona}
        \draw (G) -| (H)
        node [pos=0.2,anchor=north]
       {\pgfmathprintnumber{\sloped}};

    \legend{$k=1$,$k=2$,$k=3$,$k=4$}
\end{loglogaxis}

\end{tikzpicture}
\caption{Test 2 - $L^2$-error for the free surface elevation vs. $\Delta x$ for $k=1,2,3$ and $k=4$ at $t_{\textrm{max}}=0.2\,s$. }\label{order_h_solit}
\end{figure}
\begin{figure}[H]
\centering
\begin{tikzpicture}
\begin{loglogaxis}[
    xlabel={$\Delta x$},	
    ylabel={$L^2$-error},
    xtick={0,-1,-2,-3,-4},
    legend pos=south east,
    ymajorgrids=true,
    grid style=dashed,
    yticklabel style = {font=\tiny},
    xticklabel style = {font=\tiny},
]
\addplot[
    color=blue,
    mark=square,
    ] table [y=a, x expr=1/\thisrow{dx}] {t2_conv_u1.txt};
\addplot[
    color=red,
    mark=diamond,
    ] table [y=b, x expr=1/\thisrow{dx}] {t2_conv_u1.txt};
\addplot[
    color=brown,
    mark=o,
    ] table [y=c, x expr=1/\thisrow{dx}] {t2_conv_u1.txt};
\addplot[
    color=magenta,
    mark=star,
    ] table [y=d, x expr=1/\thisrow{dx}] {t2_conv_u4.txt};
    
 \addplot[color=black,
	mark=no,]
	 table[ 
	 x expr=1/\thisrow{dx},
         y={create col/linear regression={y=a,
         variance list={10, 10, 10, 10, 1000}}}]{t2_conv_u1_line.txt}
         coordinate [pos=0.25] (A)
         coordinate [pos=0.1]  (B);
\xdef\slope{\pgfplotstableregressiona}
\draw (A) -| (B)
     node [pos=0.1,anchor=south]
     {\pgfmathprintnumber{\slope}};
      
\addplot[color=black,
	mark=no,]
	table[
         x expr=1/\thisrow{dx},
         y={create col/linear regression={y=b,
         variance list={10, 10, 10, 10, 10}}}]{t2_conv_u1_line.txt}
         coordinate [pos=0.25] (C)
         coordinate [pos=0.1]  (D)
   ;
\xdef\slopeq{\pgfplotstableregressiona}
\draw (C) -| (D)
     node [pos=0.2,anchor=south]
     {\pgfmathprintnumber{\slopeq}};
     
\addplot[color=black, mark=no,]
	 table[
         x expr=1/\thisrow{dx},
         y={create col/linear regression={y=c,
         variance list={10, 1000, 10, 10}}}]{t2_conv_u1_line.txt}
         coordinate [pos=0.25](E)
         coordinate [pos=0.1](F);
        \xdef\slopec{\pgfplotstableregressiona}
        \draw (E) -| (F)
        node [pos=0.2,anchor=south]
       {\pgfmathprintnumber{\slopec}};
     
 \addplot[color=black, mark=no,]
	 table[
         x expr=1/\thisrow{dx},
         y={create col/linear regression={y=d,
         variance list={10, 10, 10, 10}}}]{t2_conv_u4_line.txt}
         coordinate [pos=0.1](G)
         coordinate [pos=0.23](H);
        \xdef\sloped{\pgfplotstableregressiona}
        \draw (G) -| (H)
        node [pos=0.2,anchor=north]
       {\pgfmathprintnumber{\sloped}};

    \legend{$k=1$,$k=2$,$k=3$,$k=4$}
\end{loglogaxis}

\end{tikzpicture}
\caption{Test 2 - $L^2$-error for the discharge vs. $\Delta x$ for $k=1,2,3$ and $k=4$ at $t_{\textrm{max}}=0.2\,s$. }\label{order_q_solit}
\end{figure}

\subsection{Solitary wave propagation}
We consider now the time evolution of a solitary wave profile define as follows:
\begin{equation}\label{Soliton}
\left \lbrace \begin{array}{ll}
h(x,t) = h_{0} + \eps h_0 \,sech^{2}\left( \kappa (x - ct) \right) \, ,\\
u(x,t) = c \left( 1-\dfrac{h_{0}}{h(x,t)} \right) \, ,\end{array} \right.
\end{equation}
with $\kappa=\sqrt{\dfrac{3\eps}{4h_{0}^{2}(1+\eps)}}$ \, , \, and $c=\sqrt{gh_{0}(1+\eps)}$. Note that if such profiles are exact solutions of the original Green-Naghdi equations \eqref{eq6}, or equivalently \eqref{eq6imp} with $\alpha=1$, these are only solutions of the $\left(\mathcal{CG}_\alpha\right)$ model up to $\mathcal{O}(\mu^{2})$ terms. However, for small enough values of $\eps$, such profiles are expected to propagate over flat bottoms without noticeable deformations. We use a rectangular computational domain of $200\,m$ length and $25\,m$ width, and a relatively coarse unstructured mesh with faces' length ranging from $0.7\,m$ to $1.5\,m$. The reference water depth is set to $h_{0} = 1 m$, the relative amplitude is set to $\eps = 0.2$ and the initial free surface, shown on Fig. \ref{soliton:init}, is centred at $x_0 = 50\, m$.  Cross sections along the x-direction centerline, obtained with $k=3$ at several times during the propagation, are shown on Fig. \ref{Sol_fig2}.\\
To further investigate the convergence properties of our approach, we now consider a sequence of regular meshes with mesh size ranging from $2^{-4}\,m$ to $2^{-9}\,m$ and polynomial expansions of degrees ranging from $k=1$ to $k=4$. The corresponding $L^{2}$ errors computed at $t_{\textrm{max}}=0.2\,s$ are used to plot the convergence curves on Fig. \ref{order_h_solit} for the free surface elevation and Fig. \ref{order_q_solit} for the discharge. The corresponding convergence rates obtained by linear regression are also reported and we observe an irregular behavior with rates varying between $\mathcal{O}(\h^k)$ and $\mathcal{O}(\h^{k+1})$, generally close to $\mathcal{O}(\h^\frac{1}{2})$ but with a sub-optimal convergence observed on the free surface for $k=4$. Such a behavior is also observed with slightly larger amplitude waves.
\begin{figure}
\begin{center}
\includegraphics[scale=0.4,angle=0]{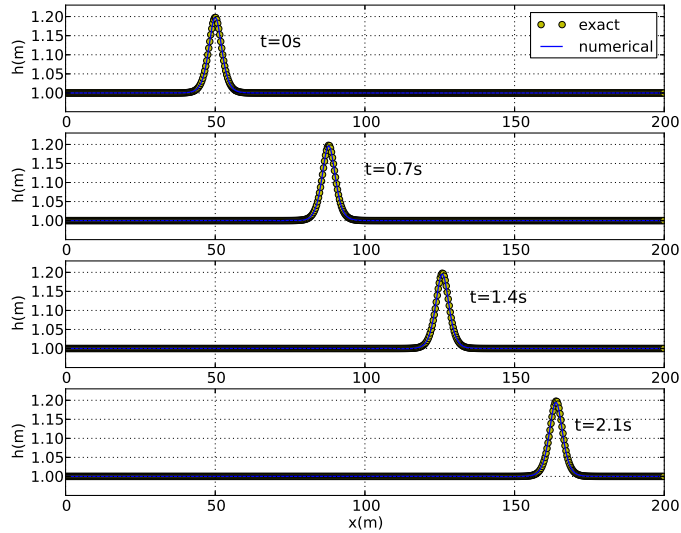} 
\caption{Test 2 - Solitary wave propagation: cross section of the free surface elevation at several times during the propagation.}
\label{Sol_fig2}
\end{center}
\end{figure}

\subsection{Run-up of a solitary wave}
We investigate now the ability of the scheme in handling dry areas with a test based on the experiments of \textit{Synolakis} \cite{Synolakis:1987p4276}. We study the propagation, shoaling, breaking and run-up of a solitary wave over a topography with constant slope $s=1/19.85$. The reference water depth is set to $h_0 = 1\,m$, and a solitary wave profile is considered as initial condition (\ref{Soliton}), with a relative amplitude $\eps=0.28$. The simulation involves a $50m \times 5m$ basin, regularly meshed with a space step $\Delta x = 0.25\,m$ and $k=2$. We show on Fig. \ref{Synolakis_1} some cross sections of the solution, taken at  various times during the propagation and compared with the experimental data.  
\begin{figure}
\begin{center}
\begin{fig}
\includegraphics[scale=0.65,angle=0]{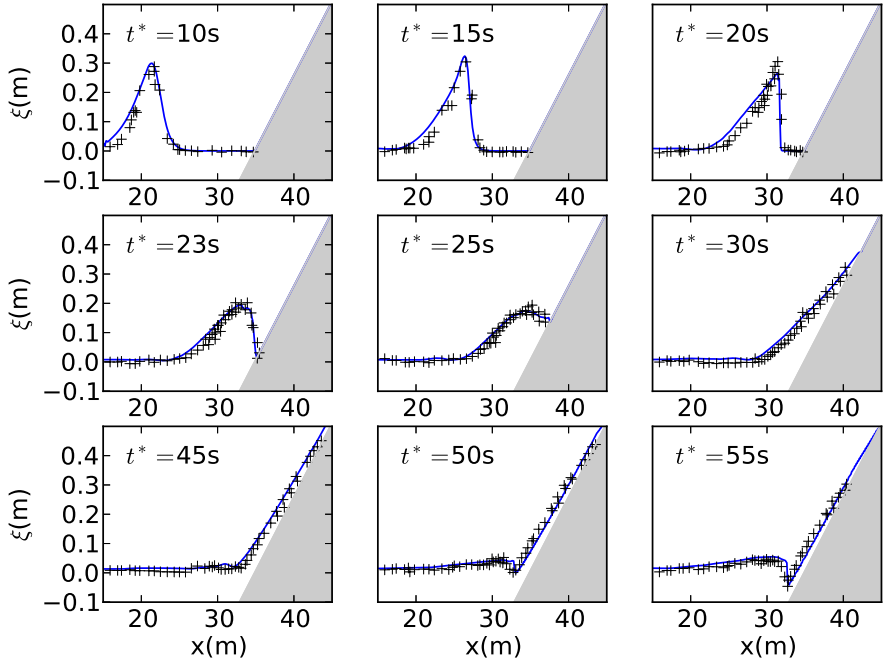} 
\end{fig}
\caption{Test 3 - Solitary wave breaking over a sloping beach: free surface profiles comparison
between numerical results (solid lines) and experimental data (crosses) at several times during the propagation ($t^{\ast} = t(g/h_0)^{1/2}$).}
\label{Synolakis_1}
\end{center}
\end{figure}
The wave breaking is identified approximately at $t^{\ast} = 17\,s$, with the normalized time $t^{\ast} = t(g/h_0)^{1/2}$, and occurs between gauges $\# 2$ and $\# 3$, which is in agreement with the experiment. The whole breaking process is well reproduced, as well as the subsequent run-up phenomena.

\subsection{Propagation of highly dispersive waves}
The dispersive properties of the numerical model are now assessed through the study of the propagation of periodic waves over a submerged bar, following the experiments of \textit{Dingemans} \cite{Dingemans:1994aa}. The computational domain is a $37.7m$ long and $0.8m$ wide basin. The topography is shown on Fig. \ref{Dingemans1}. The trapezoidal bar extends from $x=10\,m$ to $x=15\,m$ with slopes of $1/20$ at the front and $1/10$ at the back. The initial state is a flow at rest with a reference water depth of $h_0=0.4\,m$.
\begin{figure}
\begin{center}
\begin{fig}
\includegraphics[scale=0.3,angle=0]{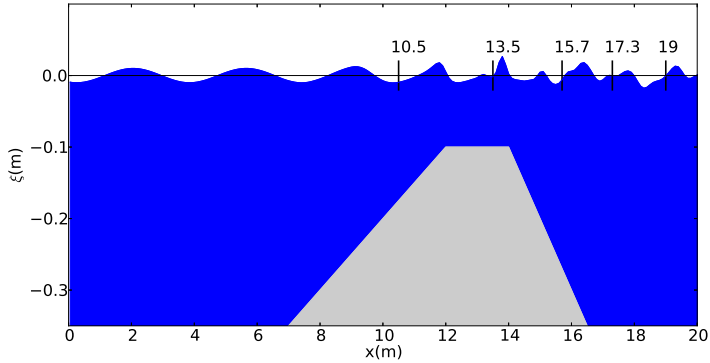}
\end{fig} 
\caption{Test 4 - Propagation of highly dispersive waves: sketch of the experiment configuration and location of the gauges used for test ${\#}A$.}
\label{Dingemans1}
\end{center}
\end{figure}
Periodic waves are generated at the left boundary, with an amplitude $a$, and a period $T$. Both generating and absorbing layers are set to $5\,m$ at the corresponding boundaries. The two following tests are carried out:
\begin{itemize}
\item[${\#}A$ :]  a=0.01\,m \quad , \quad T = 2.02\,s \quad , \quad \text{no wave - breaking}.
\item[${\#}B$ :]  a=0.025\,m \quad , \quad T = 2.51\,s \quad , \quad \text{wave - breaking}.
\end{itemize}
For both test ${\#}A$ and ${\#}B$, we set $k=2$ and use a regular triangulation obtained from rectangular elements $\Delta x = \Delta y = 0.125\,m$. The characteristics of the flow are quite complex here, notably due to the high non-linearities induced by the topography. The propagating waves first shoal and steepen over the submerged bar, generating higher-harmonics. These harmonics are progressively released on the downward slope, until encountering deeper waters again. In the first test, the initial amplitude is not large enough to trig the breaking of the waves and the Green-Naghdi equations are resolved in the whole domain. Fig. \ref{Dingemans1} indicates the location of the five wave gauges used for the first test to study the time-evolution of the free surface deformations. The results are plotted on Fig. \ref{Dingemans2}. We observe a very good agreement between analytical and experimental data for the first wave gages. As usual with this set-up, some discrepancies can be observed on the two last gages. Such behavior can be improved with the use of optimized models. In particular, the extension of the present DG approach to the \textit{3-parameters} model of \cite{lannes_marche:2014} is left for future works.
\begin{figure}
\begin{center}
\includegraphics[scale=0.4,angle=0]{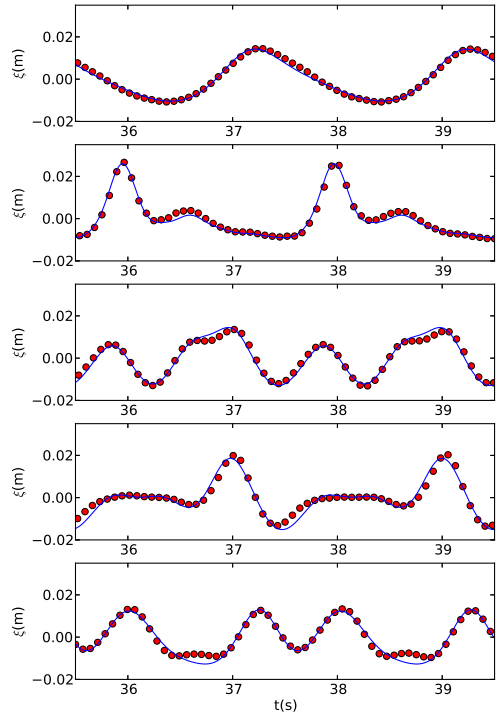}
\caption{Test 4 - Propagation of highly dispersive waves: test ${\#}A$, free surface evolution at gauges. Numerical data are denoted in plain lines.}
\label{Dingemans2}
\end{center}
\end{figure}

The test ${\#}B$ corresponds to the experiments of \cite{BejiBattjes:1994aa}. The first reference gage is located at $x=6\,m$, followed by a series of seven gages regularly spaced between $x=11\,m$ and $x=17\,m$. Wave breaking is observed at the level of the flat part of the bump during the propagation and to stabilize the computation, the dispersive terms are turned-off in the vicinity of troubled cells, using the strategy described in \S\ref{Limitation}. Again, a comparison between the numerical results and the experimental data is performed and shown on Fig. \ref{Beji2} 


\begin{figure}
\begin{center}
\begin{fig}
\includegraphics[scale=0.35,angle=0]{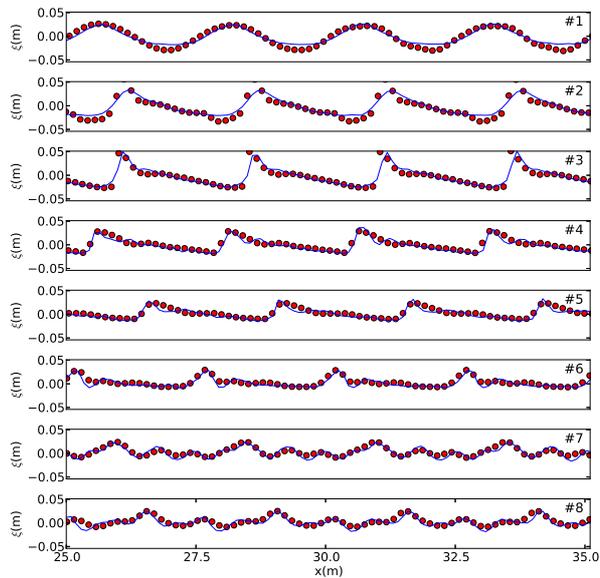}
\end{fig} 
\caption{Test 4 - Propagation of highly dispersive waves: test ${\#}B$, comparison with experimental data at gauges 1 to 8.}
\label{Beji2}
\end{center}
\end{figure}

\subsection{Periodic waves propagation over an elliptic shoal}
We study now the propagation of a train of monochromatic waves evolving over a varying topography, following the experiment of \textit{Berkhoff} \textit{et al} \cite{berkhoff:1982}. The experimental domain consists of a $20\,m$ wide and $22\,m$ long wave tank. The experimental model reproduces a seabed with a constant slope, forming an angle of $\alpha = 20^\circ$ with the $y$ axis, and deformed by a shoal with an elliptic shape, see Fig. \ref{Elliptic1}. The analytical profile for the topography in rotated coordinates $x_r = x cos(\alpha) - y sin(\alpha) , y_r =  x sin(\alpha) + y cos(\alpha)$ is given by $z = z_b + z_s$ where:
\begin{equation}
\begin{split}
z_b(x,y)&=\left\lbrace 
\begin{array}{ll} 
(5.82 + x_r)/50 \quad \text{if} \quad x_r \geq -5.82 \\
0 \quad \text{elsewhere}
\end{array} \right. \\
z_s(x,y)&=\left\lbrace 
\begin{array}{ll} 
-0.3 + 0.5 \sqrt{1 - \left( \dfrac{x_r}{3.75}\right)^{2} - \left( \dfrac{y_r}{5}\right)^{2} } \quad \text{if} \quad \left( \dfrac{x_r}{4}\right)^{2} + \left( \dfrac{y_r}{3}\right)^{2} \leq  1\\
0 \quad  \text{elsewhere}
\end{array} \right.
\end{split}
\end{equation}
The reference water depth is set to $h_0 = 0.45\,m$, and the corresponding computational domain has dimensions $[-10,12] \times [-10,10]$ (in $m$), with an extension of $5\,m$ at inlet and outlet boundaries for the generation of incident waves and their absorption. The periodic wave train has an amplitude of $a=0.0232\,m$ and a period of $T=1\,s$. Solid wall boundary conditions are used at $y=10\,m$ and $y=-10\,m$. We use an unstructured mesh of $25390$ elements which is refined in the region where the bottom variations are expected to have the greatest impact on the waves transformations (the smallest and largest mesh face's  lengths are respectively $0.2\,m$ and $0.5\,m$). The computations are performed with polynomial approximations of order $k=2$.
\begin{figure}
\begin{center}
\includegraphics[scale=0.25,angle=0]{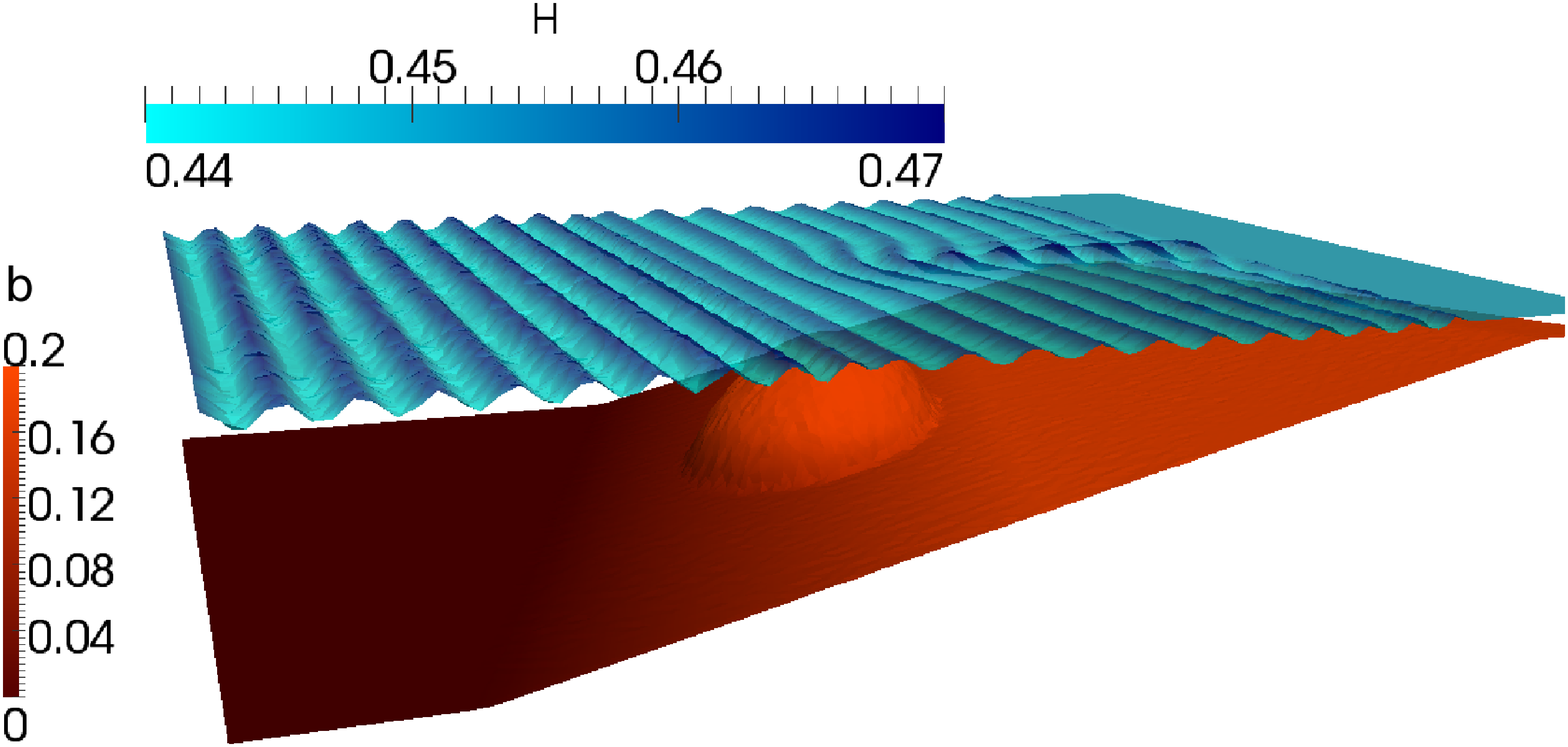}
\caption{Test 5 - Periodic waves propagation over an elliptic shoal: topography and view of the free surface.}
\label{Elliptic1}
\end{center}
\end{figure}
The data issued from the experiment provides the normalized time-averaged of the free surface measured along several cross sections. In our numerical experiment we focus on the following ones:
\begin{equation}
\begin{split}
&\text{Section 2 :} \quad x = 3\,m \quad , \quad -5\,m \leq y \leq 5\,m \, ,\\
&\text{Section 3 :} \quad x = 5\,m \quad , \quad -5\,m \leq y \leq 5\,m \, ,\\
&\text{Section 5 :} \quad x = 9\,m \quad , \quad -5\,m \leq y \leq 5\,m \, ,\\
&\text{Section 7 :} \quad y = 0\,m \quad , \quad \quad 0\,m \leq x \leq 10\,m \, ,
\end{split}
\end{equation}
allowing a good coverage of the computational domain. Time series of the free surface elevation are hence recorded along these sections, from $t=30\,s$ to $t=50\,s$ and post-processed by mean of the \textit{zero up-crossing} method to isolate single waves and compute the mean wave elevation. The results, classically normalized by the incoming wave amplitude $a$, are shown in Fig. \ref{Elliptic2} and compared with the data.
\begin{figure}
\begin{center}
\begin{fig}
\includegraphics[scale=0.4,angle=0]{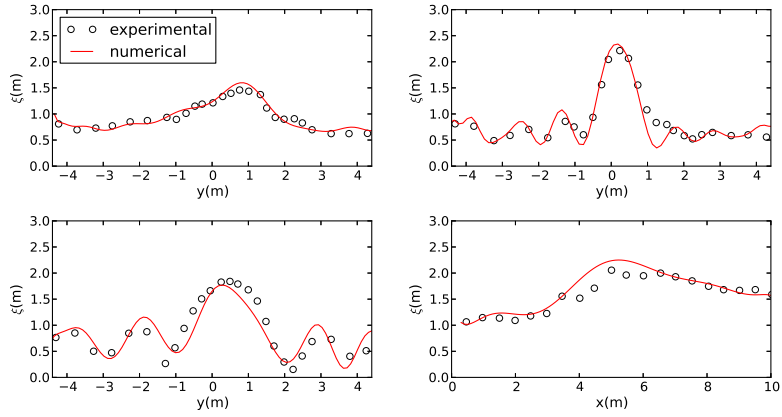}
\end{fig} 
\caption{Test 5 - Periodic waves propagation over an elliptic shoal: comparison with experimental data along the four sections.}
\label{Elliptic2}
\end{center}
\end{figure}

\begin{figure}
\begin{center}
\begin{fig}
\includegraphics[scale=0.2,angle=0]{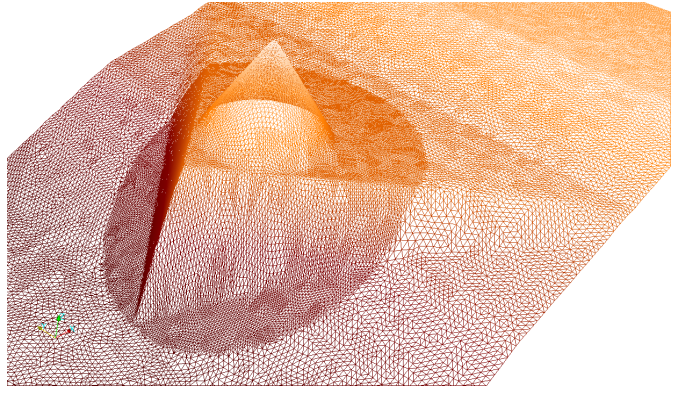} \, 
\end{fig} 
\caption{Test 6 - Solitary wave propagation over a 3d reef: overview of the mesh in the refined area.}
\label{topo2}
\end{center}
\end{figure}
\subsection{Solitary wave propagation over a 3d reef}
The following test case is based on the experiments of \cite{Swigler:2009aa} and allows to study some complex wave's interactions such as shoaling, refraction, reflection, diffraction, breaking and moving shoreline in a fully two-dimensional context. The experimental domain is a $48.8\;m$ large and $26.5\,m$ wide basin. The topography is a triangular shaped shelf with an island feature located at the offshore point of the shelf. The island is a cone of $6\,m$ diameter and $0.45\,m$ height is also placed on the apex, between $x=14\,m$ and $x=20\,m$ (see Fig. \ref{topo1}). During the experiments, free surface information was recorded via $9$ wave gauges, and the velocity information was recorded with $3$ Acoustic Doppler Velocimeters (ADVs). The complete set up can be found in \cite{Swigler:2009aa} but we also specify the gauge locations in the legend of Fig. \ref{t5gauge_eta}.\\
\begin{figure}[H]
\begin{center}
\begin{fig}
\includegraphics[scale=0.2,angle=0]{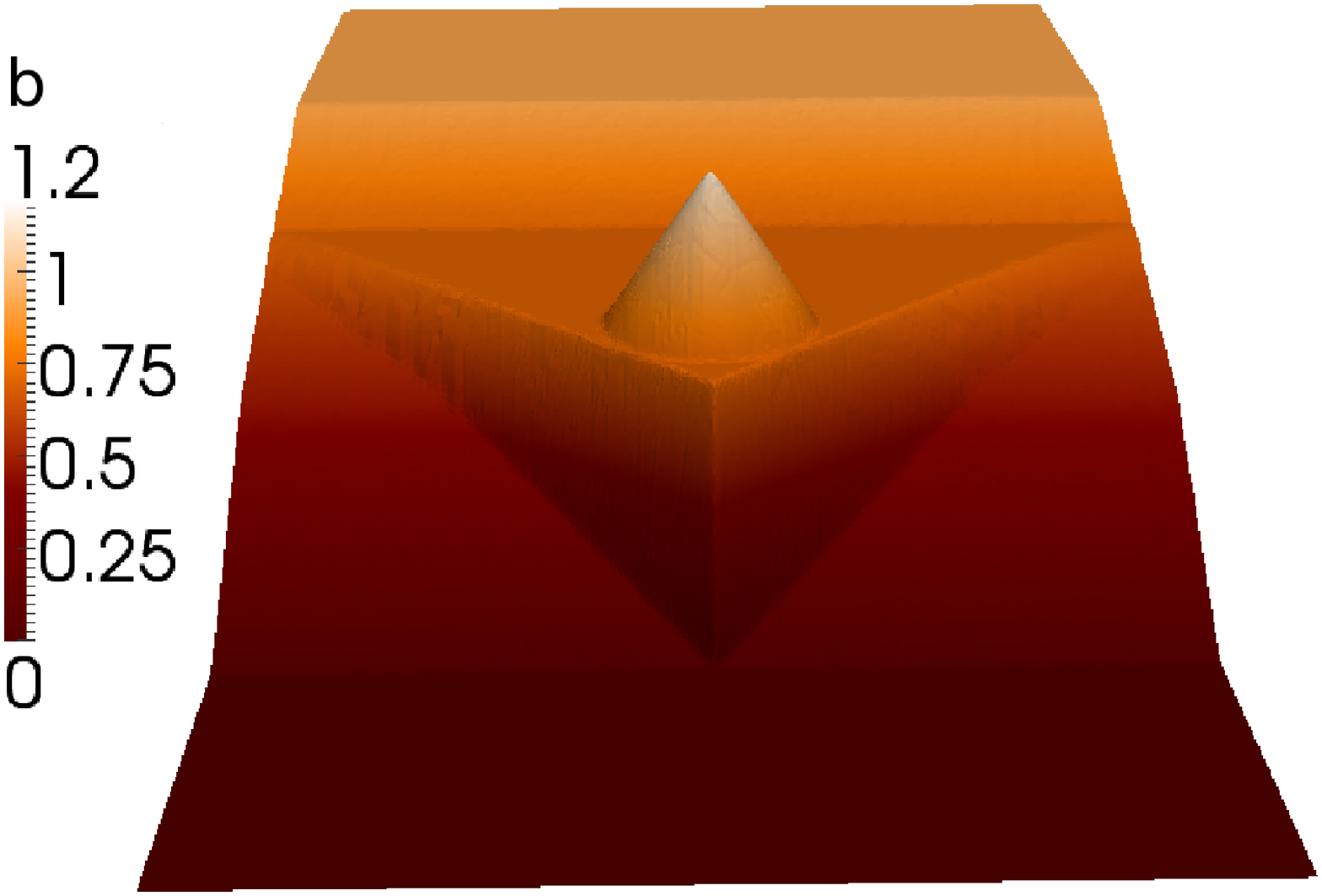} \, \includegraphics[scale=0.2,angle=0]{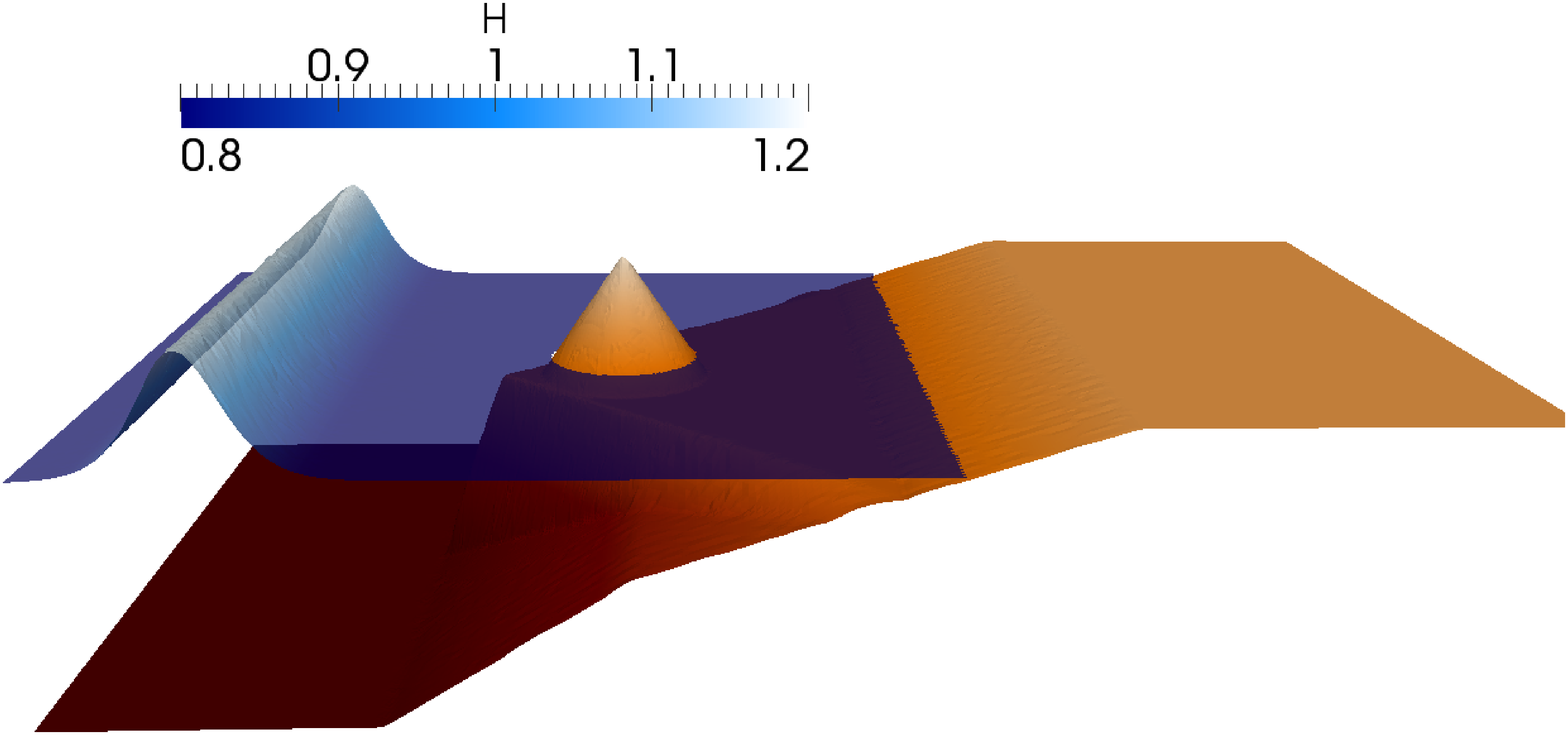}
\end{fig} 
\caption{Test 6 - Solitary wave propagation over a 3d reef: 3d view of the topography and initial condition.}
\label{topo1}
\end{center}
\end{figure}
Such a geometry motivates the use of an unstructured mesh of $45000$ elements refined in the vicinity of the cone, leading to a smallest face's length of $0.12\,m$ and a largest of $0.51\,m$, see Fig. \ref{topo2}. We set the polynomial order to $k=2$.  The reference water depth is $h=0.78\,m$ and we study the propagation and transformations of a solitary wave of relative amplitude $\eps = 0.5$. In addition to the complex processes already stated above, the transformations are quite nonlinear, making the following test particularly interesting. We show on Fig.\ref{t5AC3d1}-\ref{t5AC3d2} some snapshots of the free surface evolution before and after crossing the shelf's apex. We observe that the wave breaks at the apex slightly before $t = 5\,s$, wrapping the cone around $t=6.5\,s$. We also observe refracted waves from the reef slopes and diffracted waves around the cone which converge at the rear side at approximatively $t=8.5\,s$. The snapshots at $t=11.5\,s$ shows the run-up on the beach. Note that according with the positivity preservation property shown in the previous section, the simulation remains stable even with the use of a third order scheme.
We also provide a comparison between the numerical results and the data taken from the experiments for both free surface evolutions at the wave gages locations on Fig. \ref{t5gauge_eta} and the velocity at the ADVs locations, see Fig.\ref{t5gauge_u}. We highlight that even with this relatively moderate level of refinement, the characteristics of the flow are well resolved, especially when compared with those provided for instance in recent works using more refined meshes, see for instance\cite{Kazolea:2014kx, Roeber20121, shi:2012}.

\begin{figure}[H]
\begin{center}
\begin{fig}
\includegraphics[scale=0.18,angle=0]{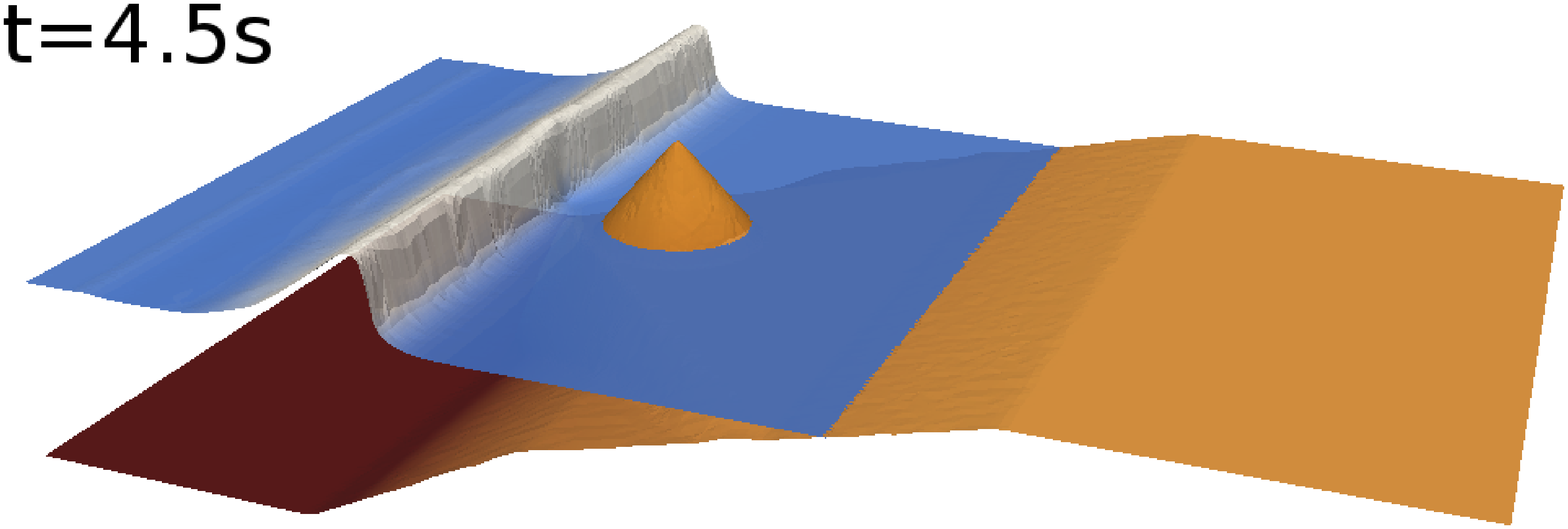}  \includegraphics[scale=0.18,angle=0]{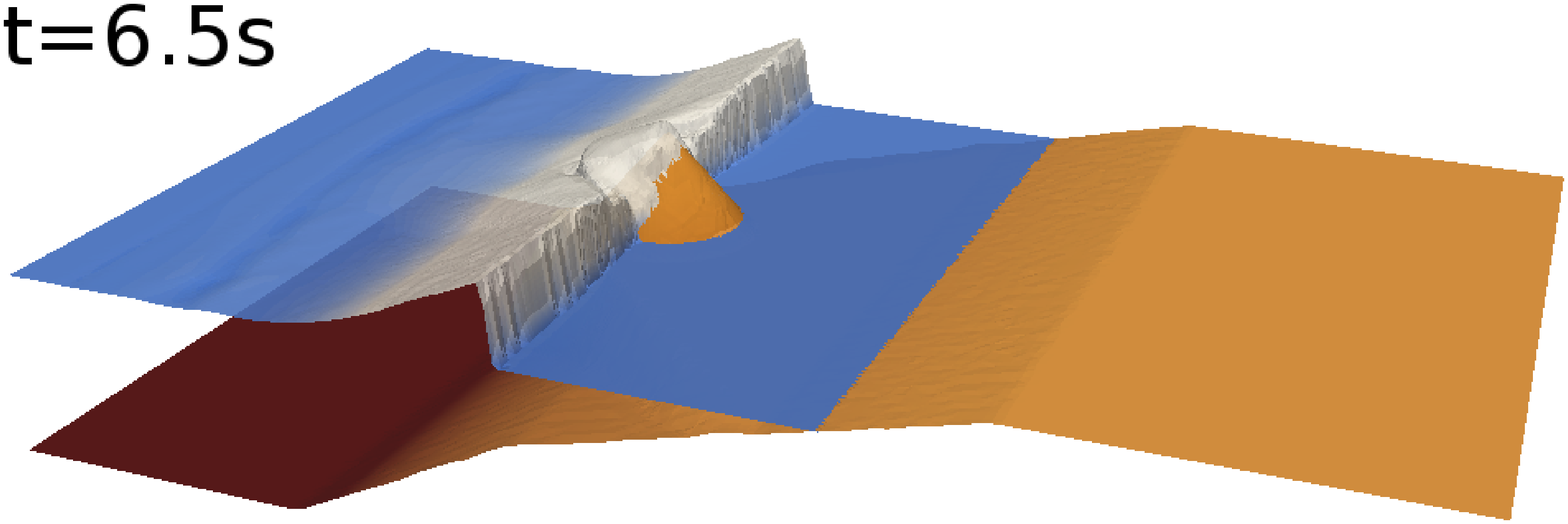}\\
\includegraphics[scale=0.18,angle=0]{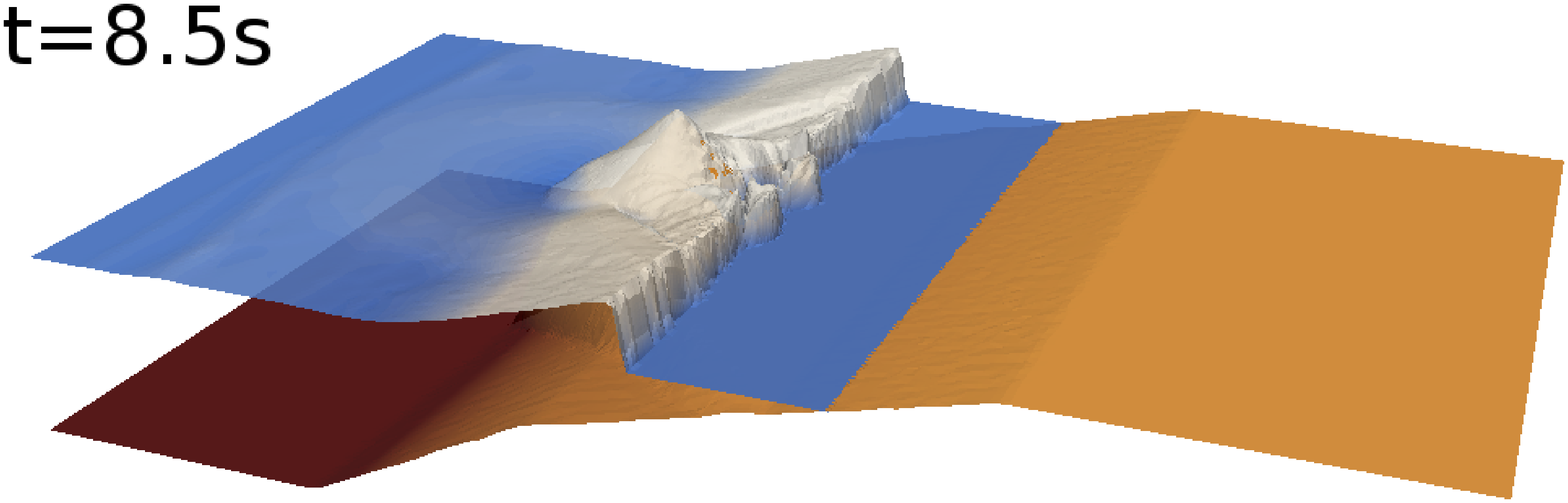}  \includegraphics[scale=0.18,angle=0]{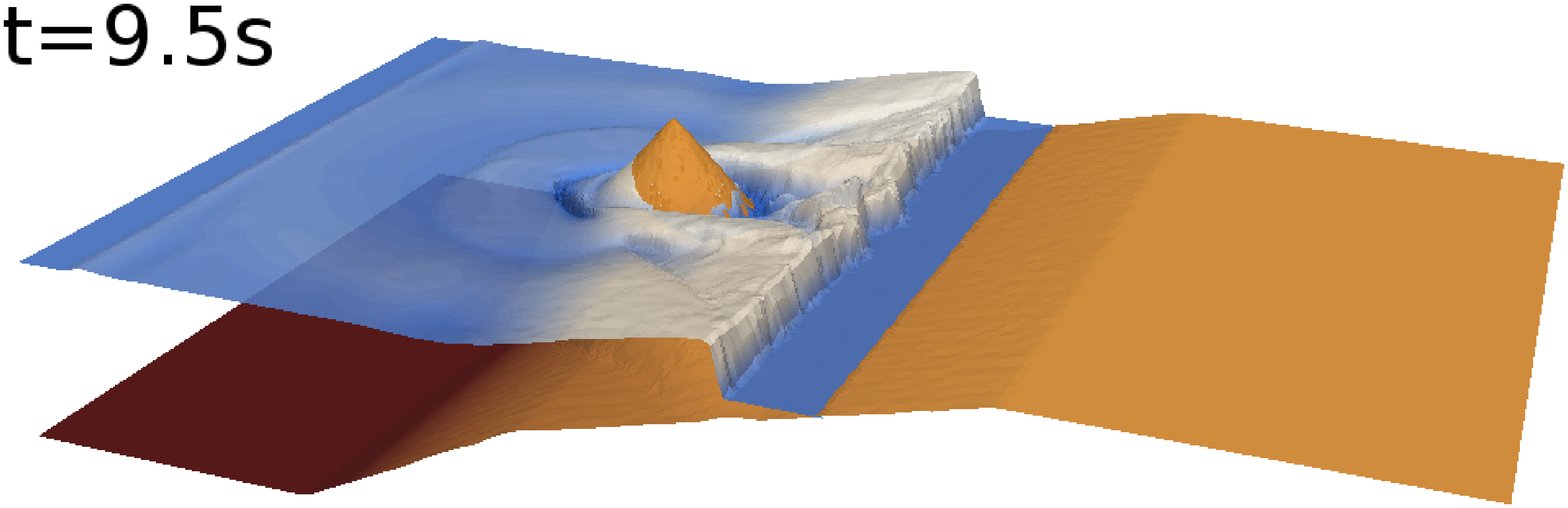}\\
\end{fig} 
\caption{Test 6 - Solitary wave propagation over a 3d reef - free surface at times $t=4.5, 6.5, 8.5$ and $9.5\,s$.}
\label{t5AC3d1}
\end{center}
\end{figure}
\begin{figure}[H]
\begin{center}
\begin{fig}
\includegraphics[scale=0.18,angle=0]{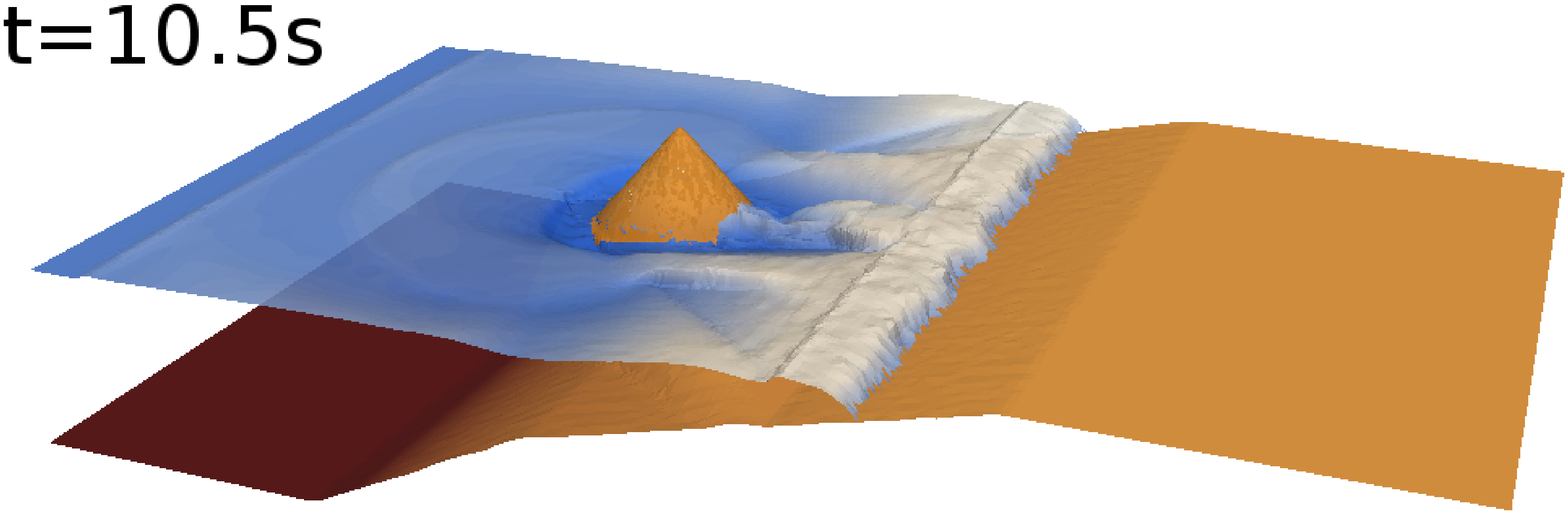}  \includegraphics[scale=0.18,angle=0]{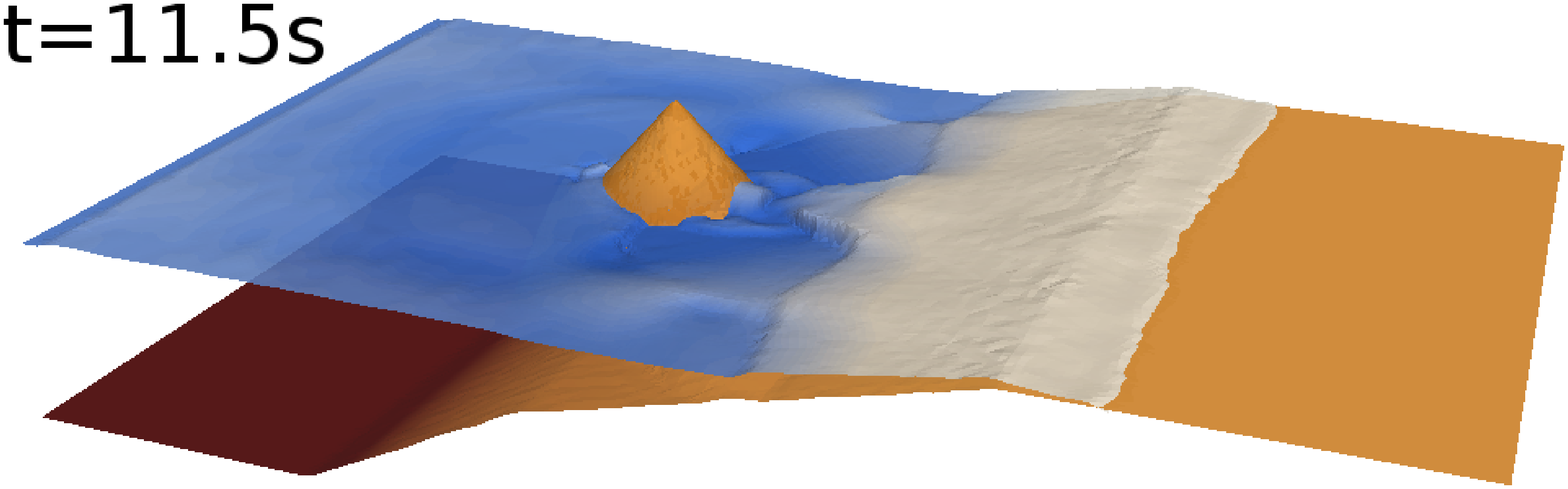}\\
\includegraphics[scale=0.18,angle=0]{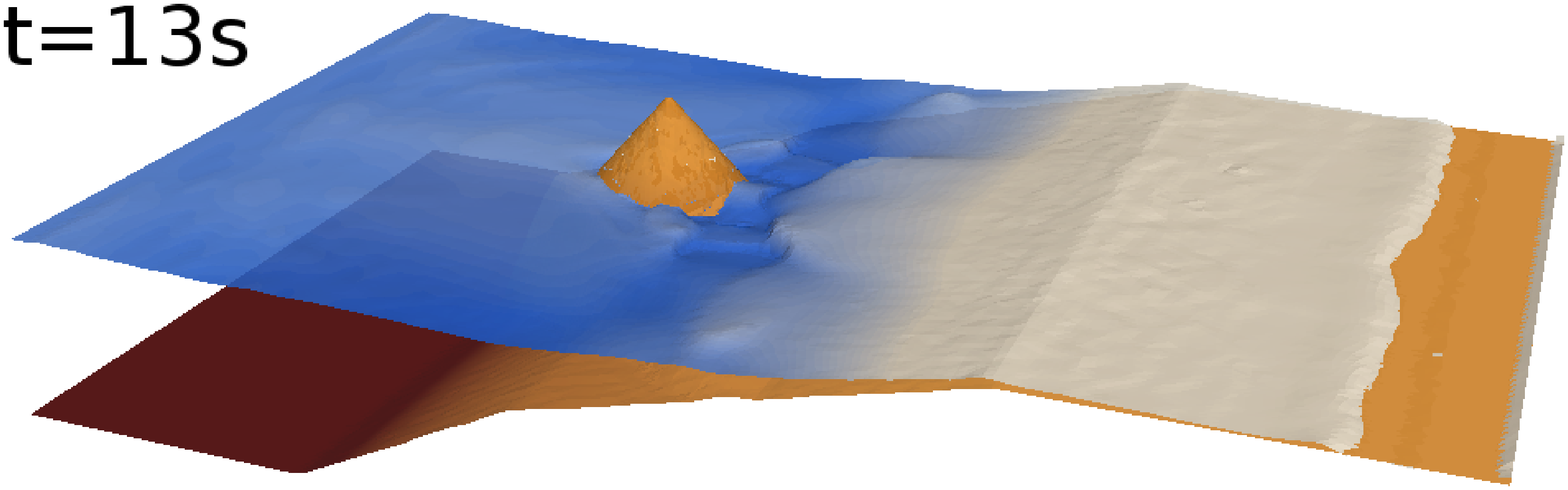}  \includegraphics[scale=0.18,angle=0]{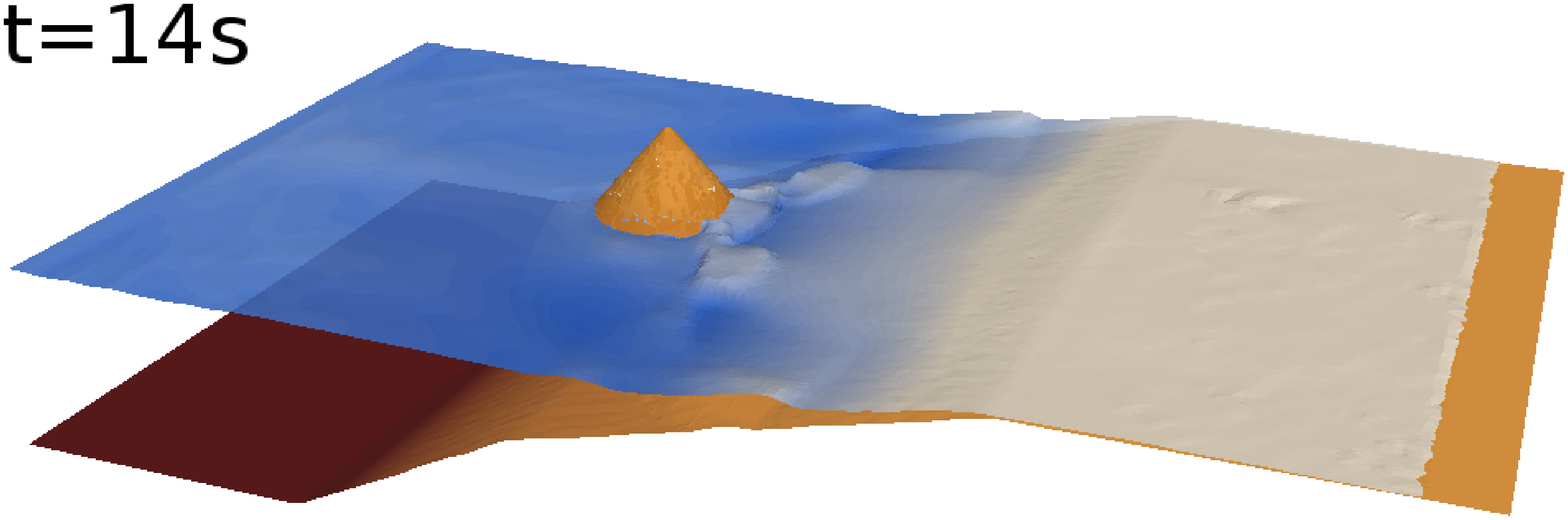}
\end{fig} 
\caption{Test 6 - Solitary wave propagation over a 3d reef - free surface at times $t=10.5, 11.5, 13$ and $14\,s$.}
\label{t5AC3d2}
\end{center}
\end{figure}

\begin{figure}
\begin{center}
\begin{fig}
\includegraphics[scale=0.35,angle=0]{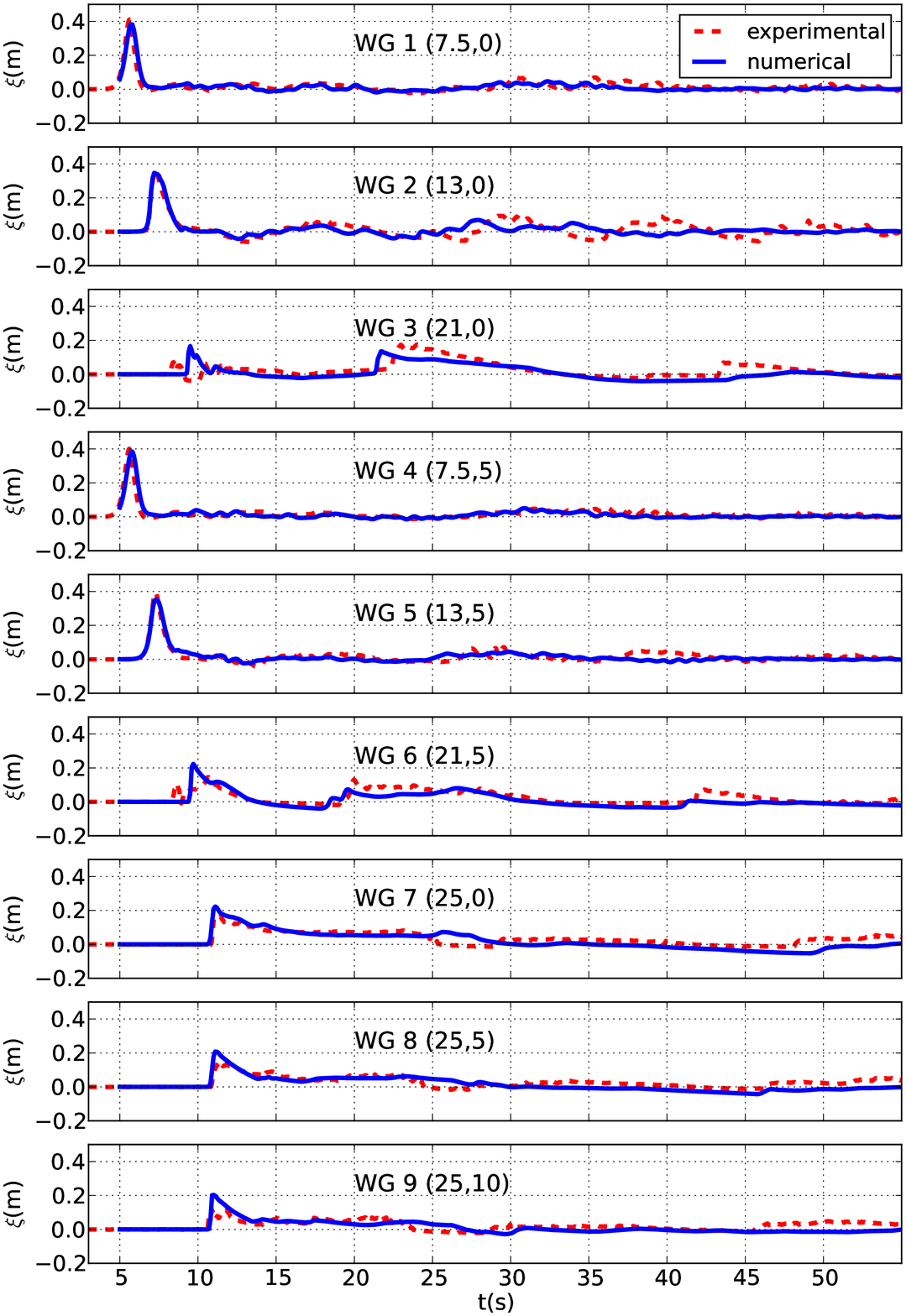}
\end{fig} 
\caption{Test 6 - Solitary wave propagation over a 3d reef - Comparison with experimental solution at gauges 1 to 9.}
\label{t5gauge_eta}
\end{center}
\end{figure}
\begin{figure}
\begin{center}
\begin{fig}
\includegraphics[scale=0.35,angle=0]{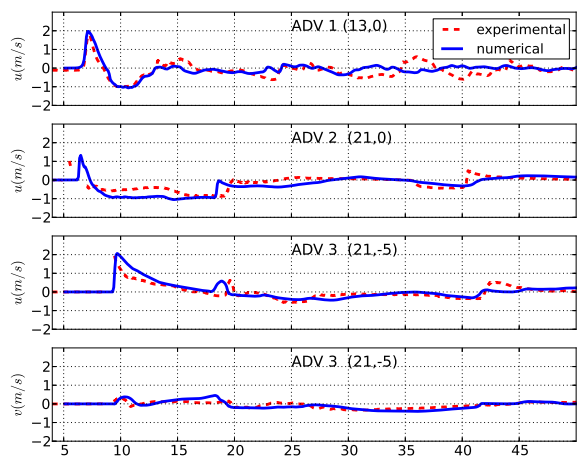}
\end{fig} 
\caption{Test 6 - Solitary wave propagation over a 3d reef: $x$ and $y$ velocity components - Comparison with experimental solution at several ADVs.}
\label{t5gauge_u}
\end{center}
\end{figure}

\section{Conclusion}
In this work, we introduce for the first time a fully discontinuous Galerkin formulation designed to approximate the solutions of a multi-dimensional Green-Naghdi model on arbitrary unstructured simplicial meshes. The underlying elliptic-hyperbolic decoupling approach, already investigated in \cite{DuranMarche:2014ab} for the $d=1$ case, allows to regard the dispersive correction simply as an algebraic source term in the NSW equations. Such dispersive correction is obtained as the solution of \textit{scalar} elliptic second order sub-problems, which are approximated using a mixed formulation and \textit{LDG} fluxes. Additionally, the preservation of motionless steady states and of the positivity of the water height, under a suitable time step restriction, are ensured  for the whole formulation and for any order of approximation.\\
These properties are assessed through several benchmarks, involving nonlinear wave transformations and severe occurrence of dry areas, and some additional convergence studies are performed. Although the obtained numerical results clearly show very promising abilities for the study of nearshore flows, there are still several issues that need to be addressed in future works. In particular, we use a low-brow approach to stabilize the computations in the vicinity of broken waves. If such a method provides results which are in good agreement with experimental data for the cases under study in this work, we are still working to extend the method of \cite{tissier2} to the $d=2$ case in a robust way.\\
Another important issue may be related to the computational cost of our approach. If we believe in the potential benefits of the use of such a non-conforming high approach, especially concerning the robust approximations of solutions involving strong gradients occurring in the vicinity of breaking waves and possibly strong topography variations, it clearly leads to larger algebraic systems than those obtained with classical continuous approximations. Future works will therefore be devoted to go further in the development and optimization of our approach, and in particular to the reduction of the associated computational cost.

\paragraph{Acknowledgements}
The second author acknowledges partial support from the ANR-13-BS01-0009-01 BOND.

{%
  \footnotesize

}
\end{document}